\newtheorem{theorem}{Theorem}[section]
\newtheorem{lemma}[theorem]{{\bf Lemma}}
\newtheorem{rem}[theorem]{{\bf Remark}}
\newtheorem{definition}{Definition}[section]
\numberwithin{equation}{section}
\newenvironment{proof}{\indent{\em Proof:}}{\quad \hfill
$\Box$\vspace*{2ex}}
\newcommand{\bb}{{\mathfrak B}}
\begin{document}
\setcounter{page}{1}
\begin{center}
\vspace{0.3cm} {\large{\bf On the Nonlinear Impulsive\\ Volterra-Fredholm Integrodifferential Equations }} 
\\
\vspace{0.4cm}
  Pallavi U. Shikhare $^{1}$ \\
 jananishikhare13@gmail.com  \\
 
 \vspace{0.30cm}
 Kishor D. Kucche $^{2}$\\
 kdkucche@gmail.com\\
 
 \vspace{0.30cm}
  J. Vanterler da C. Sousa  $^{3}$\\
 ra160908@ime.unicamp.br\\
 \vspace{0.35cm}
 
 \vspace{0.30cm}
 $^{1,2}$ Department of Mathematics, Shivaji University, Kolhapur-416 004, Maharashtra, India.\\
 $^{3}$  Department of Applied Mathematics, Imecc-Unicamp, 13083-859, Campinas, SP, Brazil.
\end{center}

\def\baselinestretch{1.0}\small\normalsize
\begin{abstract}
In this paper, we investigate existence and uniqueness of solutions  of nonlinear Volterra-Fredholm impulsive integrodifferential equations. Utilizing theory of Picard operators we examine  data dependence of  solutions  on initial conditions and on nonlinear functions involved in integrodifferential equations. Further, we extend the integral inequality for piece-wise continuous functions to mixed case and apply it to investigate the dependence of solution on initial data through $\epsilon$-approximate solutions. It is seen that the  uniqueness and dependency results got by means of  integral inequity requires less restrictions on the functions involved in the equations than that required through  Picard operators  theory.
\end{abstract}
\noindent\textbf{Key words:}  Volterra-Fredholm equation; Integral inequality; Impulse condition; $\epsilon$-approximate solution; Dependence of solutions. \\
\noindent
\textbf{2010 Mathematics Subject Classification:} 26D10,  34K30, 34A12, 47D60, 34K45. 
\allowdisplaybreaks

\section{Introduction}

Numerous evolution processes are described through the specific snapshots of time as they experience a difference in state unexpectedly. In such a cases span may be irrelevant in correlation with the length of the process. It is expected that in such cases these perturbations  act instantaneously, means in the form of impulses.  

Different issues  of the theoretical and practical importance lead us to consider the evolution of real processes with short-term perturbations. Such process are often described in the frameworks of differential and integrodifferential equations with impulse effect \cite{Bainov1,Samoilenko}. It is seen that \cite{Liu1} the differential equations with impulse conditions are commonly used to model the phenomena that cannot be modeled by the conventional initial value problems.

In the perspective on its application the  differential and integrodifferential  equations with impulse effect have been analyzed by various scientist for existence, uniqueness, stability and different types data dependency by using various techniques  \cite{Wang1,Donal,Wang2,Haoa,Zada,Zhang,Kucche0} and the references cited therein. 

Frigon and O'regan \cite{Frigon}, using the fixed point approach proved  existence results  for  impulsive initial value problem
\begin{align*} 
& w'(\tau)=f\left(\tau, w(\tau) \right),\, 0< \tau <b,\,~\tau \not= \tau_{k},\\
& \Delta w(\tau_{k})= I_{k}\left( w(\tau_{k})\right),\,~k=1,2,\cdots,m,\, m\in \mathbb{N},\\
& w(0)  =w_{0}
\end{align*}
and utilizing the idea of upper and lower solutions, authors have derived existence results for the boundary value problem
\begin{align*} 
& w'(\tau)=f\left(\tau, w(\tau) \right),\, 0< \tau <b,\,~\tau \not= \tau_{k},\\
& \Delta w(\tau_{k})= I_{k}\left( w(\tau_{k})\right),\,~k=1,2,\cdots,m,\, m\in \mathbb{N},\\
& w(0)  =w(b),
\end{align*}
where  $\Delta w(\tau_{k})= w(\tau^{+}_{k})-w(\tau^{-}_{k})$, where $ w(\tau^{+}_{k})= \underset{\epsilon\to 0^{+}}{\lim} w(\tau_{k}+\epsilon)$ and $ w(\tau^{-}_{k})= \underset{\epsilon\to 0^{-}}{\lim} w(\tau_{k}+\epsilon)$.

Using Picard, weakly Picard operators theory Bielecki norms, Wang et al. \cite{Wang1}, have examined nonlocal problem 
\begin{align*} 
& w'(\tau)=f\left( \tau, w(\tau)\right),\, \tau\in [0,b],\\
& w(0)= w_{0}+g(w),
\end{align*}
for existence, uniqueness and data dependence .  Authors have expanded the acquired outcomes at that point to a class of impulsive Cauchy problems by adapting the same strategies. Wang et al. \cite{Wang}, by applying the integral inequality of Gronwall type for piece-wise continuous functions investigated  Ulam--Hyers stability for impulsive ordinary differential equations. 

Liu \cite{Liu1} studied  the existence and uniqueness of mild and classical
 solutions for a nonlinear impulsive evolution equation
 \begin{align*} 
 & w'(\tau)=\mathscr{A}w(\tau)+f\left(\tau, w(\tau) \right),\,  0<\tau <b,\,~\tau \not= \tau_{k}, \\
 & \Delta w(\tau_{k})= I_{k}\left( w(\tau_{k})\right),\,~k=1,2,\cdots,~ \tau_0<\tau_1< \cdots<b\\
 & w(0)  =w_{0}
 \end{align*}
in a Banach space $X$, where $\mathscr{A}$ is the generator of a strongly continuous semigroup.

On the other hand, Anguraj et al. \cite{Anguraj}, using semigroup theory and contraction mapping principle, proved the existence and uniqueness of the mild and classical solutions for the impulsive evolution mixed Volterra-Fredholm integrodifferential equation. Muresan \cite{Muresan} explored existence, uniqueness and data dependence of the solutions to  mixed Volterra-Fredholm integrodifferential equation in Banach space by
Utilizing Picard and weakly Picard operators’ method and Bielecki norms.

It is noticed that in many of the works \cite{Rus5}-\cite{Kucche00}, differential and integral inequalities \cite{Pachpatte1,Pachpatte2}  play central role in the investigation of different properties of solution such as uniqueness, boundedness, stability etc. 

Motivated by works \cite{Wang1,Wang,Anguraj,Muresan}, we will  investigate the existence, uniqueness and continuous data dependence of solutions of nonlinear Volterra-Fredholm impulsive integrodifferential equations (VFIIDEs) of the form:
\begin{small}
\begin{align}\label{F1}
 & w'(\tau) =\mathscr Aw(\tau)+G\left(\tau, w(\tau), \int_0^\tau F_1(\tau,\sigma,w(\sigma))d\sigma,\int_0^b  F_2(\tau,\sigma,w(\sigma))d\sigma\right),\nonumber\\
&\tau \in J,\tau \not= \tau_{k},k=1,2,\cdots,n, \\
& w(0) = w_{0},\, w_{0}\in X \label{F2}\\
&\Delta w(\tau_{k})= I_{k}(w(\tau_{k})),~k=1,2,\cdots,n ,\label{0F3}
\end{align}
\end{small}
where $ J= [0,b],\,\mathscr A:X\to X$ is the infinitesimal generator of $C_0$-semigroup $\{\mathscr T(\tau)\}_{\tau\geq 0}$ in Banach space $(X,\|\cdot\|),\,I_{k}:X\to X\,(k=1,\cdots n)$ are continuous functions and  $ G, ~F_{1}\,\text{and}\,  F_{2}$ are the functions specified later. The impulsive moments $\tau_{k}$ are such that  $0\leq\tau_{0}<\tau_{1}<\cdots<\tau_{n}<\tau_{n+1}\leq b, n\in \mathbb{N}$. Further, $\Delta w(\tau_{k})= w(\tau^{+}_{k})-w(\tau^{-}_{k})$, where  $ w(\tau^{+}_{k})= \underset{h\to 0^{+}}{\lim} w(\tau_{k}+h)$  and $ w(\tau^{-}_{k})= \underset{h\to 0^{-}}{\lim} w(\tau_{k}+h)$ are respectively the right and left limits of $w$ at $\tau_{k}$.  

The  dependence of  solutions  on initial conditions is firstly obtained via Picards' operator technique.  Further, we extend  the integral inequality for piece-wise continuous functions given in Theorem 2 of \cite{Hristova} for mixed case. The  extended version of integral inequality we obtained then utilized to analyze the dependence of solution on initial data through $\epsilon$-approximate solutions. It is seen that results we obtained via integral inequity regrading uniqueness and dependence of solution requires less restrictions on the nonlinear functions involved in the equations than that are demanded through  Picard operators  theory.

This paper is organized as follows. Section 2, relates with  preliminaries. We will discuss  existence, uniqueness  and continuous data dependence  in section 3. Section 4, deals with dependency of solutions via Picard theory.  In section 5, we  prove the variant of  integral inequality for piece-wise continuous functions. In section 6,  we provide the application of integral inequality we obtained to study of  data dependence via $\epsilon$-approximate solution to VFIIDEs.  Paper finishes with concluding remarks.


\section{Preliminaries} \label{preliminaries}
\begin{definition} [\cite{Wang,Rus3,Ilea}]. 
Let $\left(X, d \right)$  be a metric space. An operator $\mathcal{A}:X \rightarrow X$ is a Picard operator {\rm (PO)}, if there exists $w^{\ast }\in X $ satisfying the following conditions:
\begin{flushleft}
\begin{tabular}{cl}
{\rm (a)} & $\mathbf{F}_{\mathcal{A}}=\{w^{\ast }\}$, where $\mathbf{F}_{\mathcal{A}}:=\{w\in X  : \mathcal A\left( w\right) =w\}$. \\ 
{\rm (b)} & the sequence $\left( \mathcal A^{n}(w_{0})\right) _{n\in \mathbb{N}}$ converges to $w^{\ast }$ for all $w_{0}\in X$.
\end{tabular}
\end{flushleft}
\end{definition}
\begin{theorem}[\cite{Wang,Rus3,Ilea}]. \label{B1}
Let $(Y,d)$ be a complete metric space and $\mathcal{A},\mathcal{B}: Y\to Y$ two operators. We suppose the following:
\begin{flushleft}
\begin{tabular}{cl}
{\rm (a)} & $\mathcal{A} $ is a contraction with contraction constant $\alpha$ and $\mathbf{F}_{\mathcal{A}}=\{w^{\ast }_{\mathcal{A}}\}$ ; \\ 
{\rm (b)} &  $\mathcal{B}$ has fixed point and $w^{\ast }_{\mathcal{B}}\in\mathbf{F}_{\mathcal{B}} $;\\
{\rm (c)} & there exists $\rho>0$ such that $d\left( \mathcal{A}(w),\mathcal{B}(w)\right)\leq\rho$ for all $w\in Y.$
\end{tabular}
\end{flushleft}
Then $$d \left( w^{\ast }_{\mathcal{A}},w^{\ast }_{\mathcal{B}}\right)  \leq \frac{\rho}{1-\alpha}.$$
\end{theorem}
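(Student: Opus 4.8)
The plan is to obtain the bound by a single application of the triangle inequality, exploiting that $w^{\ast}_{\mathcal{A}}$ and $w^{\ast}_{\mathcal{B}}$ are fixed points of $\mathcal{A}$ and $\mathcal{B}$ respectively, and then inserting the ``crossed'' point $\mathcal{A}(w^{\ast}_{\mathcal{B}})$ so that the contraction constant couples to the quantity we want to estimate. First I would rewrite the left-hand side using the fixed-point identities: by (a) we have $\mathcal{A}(w^{\ast}_{\mathcal{A}})=w^{\ast}_{\mathcal{A}}$, and by (b) we have $\mathcal{B}(w^{\ast}_{\mathcal{B}})=w^{\ast}_{\mathcal{B}}$, whence $d(w^{\ast}_{\mathcal{A}},w^{\ast}_{\mathcal{B}})=d\big(\mathcal{A}(w^{\ast}_{\mathcal{A}}),\mathcal{B}(w^{\ast}_{\mathcal{B}})\big)$.

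The key step is to interpose $\mathcal{A}(w^{\ast}_{\mathcal{B}})$ via the triangle inequality, giving
\[
d\big(\mathcal{A}(w^{\ast}_{\mathcal{A}}),\mathcal{B}(w^{\ast}_{\mathcal{B}})\big)\leq d\big(\mathcal{A}(w^{\ast}_{\mathcal{A}}),\mathcal{A}(w^{\ast}_{\mathcal{B}})\big)+d\big(\mathcal{A}(w^{\ast}_{\mathcal{B}}),\mathcal{B}(w^{\ast}_{\mathcal{B}})\big).
\]
For the first summand I would invoke the contraction property from (a) to bound it by $\alpha\, d(w^{\ast}_{\mathcal{A}},w^{\ast}_{\mathcal{B}})$, and for the second summand I would apply hypothesis (c) with the choice $w=w^{\ast}_{\mathcal{B}}$ to bound it by $\rho$. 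Combining these yields $d(w^{\ast}_{\mathcal{A}},w^{\ast}_{\mathcal{B}})\leq \alpha\, d(w^{\ast}_{\mathcal{A}},w^{\ast}_{\mathcal{B}})+\rho$.

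Finally, since $\alpha$ is a contraction constant we have $0\leq\alpha<1$, so $1-\alpha>0$, and I would transpose the $\alpha\, d(w^{\ast}_{\mathcal{A}},w^{\ast}_{\mathcal{B}})$ term and divide to reach the asserted estimate $d(w^{\ast}_{\mathcal{A}},w^{\ast}_{\mathcal{B}})\leq \rho/(1-\alpha)$. This is really the only place where the contraction hypothesis is used in an essential (quantitative) way.

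There is no genuine obstacle here, as the statement is an elementary abstract estimate; the only subtlety — which is what makes the argument work — is the choice of intermediate point in the triangle inequality. Inserting $\mathcal{A}(w^{\ast}_{\mathcal{B}})$ rather than, say, $\mathcal{B}(w^{\ast}_{\mathcal{A}})$ is what lets the contraction bound reproduce $d(w^{\ast}_{\mathcal{A}},w^{\ast}_{\mathcal{B}})$ on the right and thereby permits the final rearrangement. It is worth emphasizing that $\mathcal{B}$ is not assumed to be a contraction and may possess several fixed points; the estimate is valid for an arbitrary choice $w^{\ast}_{\mathcal{B}}\in\mathbf{F}_{\mathcal{B}}$, which is exactly the generality needed for the data-dependence applications in the later sections.
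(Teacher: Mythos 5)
Your proof is correct. The paper itself states this result without proof, merely citing \cite{Wang,Rus3,Ilea} in its preliminaries, and your argument---interposing $\mathcal{A}(w^{\ast}_{\mathcal{B}})$ via the triangle inequality, applying the contraction bound and hypothesis (c), then rearranging using $\alpha<1$---is exactly the standard proof of this data-dependence lemma as it appears in those references, including the correct observation that the intermediate point must be $\mathcal{A}(w^{\ast}_{\mathcal{B}})$ rather than $\mathcal{B}(w^{\ast}_{\mathcal{A}})$ since $\mathcal{B}$ need not be a contraction.
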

\begin{lemma}[\cite{Hristova}, Theorem 16.4 ]\label{L1}. Let for $\tau\geq \tau_{0}$ the inequality
\begin{small}
\begin{align*}
u(\tau)\leq \tilde{a}(\tau)+\int_{\tau_{0}}^{\tau} g(\tau,\sigma)u(\sigma)d\sigma+\underset{\tau_{0}<\tau_{k}<\tau}{\sum} ~\tilde{\beta}_{k}(\tau)\,u(\tau_{k}),
\end{align*}
\end{small}
hold, where $~\tilde{\beta}_{k}(\tau)\, (k\in \mathbb{N})$ are nondecreasing functions for $\tau\geq \tau_{0},\,\tilde{a}\in PC([\tau_{0},\infty ),\mathbb{R}_{+})$ is  a nondecreasing function, $u\in PC([\tau_{0},\infty ),\mathbb{R}_{+})$, and $g(\tau,\sigma)$ is a continuous nonnegative function for $\tau,\sigma\geq \tau_{0}$ and nondecreasing with respect to $\tau$ for any fixed $\sigma\geq \tau_{0}$.\\
Then, for $\tau\geq \tau_{0}$ the following inequality is valid:
\begin{small}
\begin{align*}
u(\tau)\leq \tilde{a}(\tau)\underset{\tau_{0}< \tau_{k}<\tau}{\prod} \left( 1+\tilde{\beta}_{k}(\tau)\right) \exp \left( \int_{\tau_{0}}^{\tau} g(\tau,\sigma)d\sigma\right).
\end{align*}
\end{small}
\end{lemma}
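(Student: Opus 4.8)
The plan is to fix the upper endpoint, use the monotonicity hypotheses to freeze the kernels, and then run the classical Gronwall argument interval by interval, collecting a multiplicative jump factor at each impulse moment. Concretely, I would fix an arbitrary $T\geq \tau_{0}$ and work on $[\tau_{0},T]$. Since $g(\tau,\sigma)$ is nondecreasing in its first argument and $\tilde{a},\tilde{\beta}_{k}$ are nondecreasing, for every $\tau\in[\tau_{0},T]$ the hypothesis gives
\[
u(\tau)\leq \tilde{a}(T)+\int_{\tau_{0}}^{\tau} g(T,\sigma)\,u(\sigma)\,d\sigma+\underset{\tau_{0}<\tau_{k}<\tau}{\sum}\tilde{\beta}_{k}(T)\,u(\tau_{k}),
\]
because $u\geq 0$ lets each $\tau$-dependent datum be replaced by its larger value at $T$.

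Next I would introduce the majorant
\[
p(\tau)=\tilde{a}(T)+\int_{\tau_{0}}^{\tau} g(T,\sigma)\,u(\sigma)\,d\sigma+\underset{\tau_{0}<\tau_{k}<\tau}{\sum}\tilde{\beta}_{k}(T)\,u(\tau_{k}),\qquad \tau\in[\tau_{0},T].
\]
By construction $u(\tau)\leq p(\tau)$, and $p$ is nondecreasing, left-continuous, and absolutely continuous on each interval between consecutive impulse points, where $p'(\tau)=g(T,\tau)u(\tau)\leq g(T,\tau)p(\tau)$. Because the sum carries the strict inequality $\tau_{k}<\tau$, the function $p$ has value $p(\tau_{k})=p(\tau_{k}^{-})$ at each impulse moment and jumps only as $\tau$ passes $\tau_{k}$, namely $p(\tau_{k}^{+})=p(\tau_{k})+\tilde{\beta}_{k}(T)u(\tau_{k})$; since $u(\tau_{k})\leq p(\tau_{k})$ this yields the multiplicative estimate $p(\tau_{k}^{+})\leq (1+\tilde{\beta}_{k}(T))\,p(\tau_{k})$.

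The core step is then an induction over the inter-impulse intervals. On $(\tau_{0},\tau_{1})$ the differential inequality $p'\leq g(T,\cdot)p$ integrates, via the integrating factor $\exp(-\int_{\tau_{0}}^{\tau}g(T,\sigma)\,d\sigma)$, to $p(\tau)\leq \tilde{a}(T)\exp(\int_{\tau_{0}}^{\tau}g(T,\sigma)\,d\sigma)$; crossing $\tau_{1}$ multiplies the bound by $(1+\tilde{\beta}_{1}(T))$; restarting the same estimate from $p(\tau_{1}^{+})$ on $(\tau_{1},\tau_{2})$ and continuing, the successive exponential factors combine by additivity of the integral into the single factor $\exp(\int_{\tau_{0}}^{\tau}g(T,\sigma)\,d\sigma)$, while the jump factors accumulate into the product $\prod_{\tau_{0}<\tau_{k}<\tau}(1+\tilde{\beta}_{k}(T))$. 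Evaluating at $\tau=T$ and using $u(T)\leq p(T)$ produces the claimed bound at the single point $T$; since $T\geq\tau_{0}$ was arbitrary, relabelling $T$ as $\tau$ completes the argument.

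The part that needs the most care is the bookkeeping at the impulse points: one must confirm that the jump estimate uses the value of $p$ \emph{just before} the jump (the strict inequality in the sum makes this legitimate), and that the Gronwall estimate is genuinely reinitialized from $p(\tau_{k}^{+})$ on each subinterval, so that the factors $(1+\tilde{\beta}_{k}(T))$ enter multiplicatively rather than additively. Everything else is the standard integrating-factor computation and the telescoping of the exponential factors, which I would not belabour.
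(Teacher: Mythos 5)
The paper never proves this lemma: it is quoted as a known result (Theorem 16.4, respectively Theorem 2, of the Bainov--Hristova reference \cite{Hristova}) and used as a black box, so there is no in-paper argument to compare yours against. Judged on its own, your proof is correct, and it is the standard argument for Gronwall inequalities with impulses. The two points that genuinely need care are both handled properly: (i) freezing the first argument of $\tilde a$, $g(\cdot,\sigma)$ and $\tilde\beta_k$ at the fixed endpoint $T$ is legitimate precisely because $u\geq 0$ and all three data are nondecreasing in that argument, and this reduction is what makes an interval-by-interval integrating-factor argument possible at all; (ii) the strict inequality $\tau_k<\tau$ in the sum makes your majorant $p$ left-continuous, so $p(\tau_k)=p(\tau_k^-)$, the jump occurs only upon crossing $\tau_k$, and $u(\tau_k)\leq p(\tau_k)$ converts the additive jump $\tilde\beta_k(T)u(\tau_k)$ into the multiplicative factor $1+\tilde\beta_k(T)$. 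With those in place, the induction over inter-impulse intervals, the telescoping of the exponential factors, the evaluation at $\tau=T$ (noting that the impulse points entering the product are exactly those with $\tau_k<T$), and the final relabelling are all sound.
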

\begin{theorem} [\cite{Hristova}, Theorem 2]\label{LF1}. Let for $\tau\geq \tau_{0}$ the following inequality hold
\begin{small}
\begin{align*}
u(\tau)\leq a(\tau)+\int_{\tau_{0}}^{\tau} b(\tau,\sigma)u(\sigma)d\sigma+\int_{\tau_{0}}^{\tau}\left( \int_{\tau_{0}}^{\sigma} k(\tau,\sigma,\varsigma)u(\varsigma)d\varsigma\right) d\sigma+\underset{\tau_{0}<\tau_{k}<\tau}{\sum} ~\beta_{k}(\tau)\,u(\tau_{k}),
\end{align*}
\end{small}
where $u,\,a \in PC([\tau_{0},\infty),\mathbb{R}_{+}),$ $\tilde{a}$ is a nondecreasing, $b(\tau,\sigma)$ and $\,k(\tau, \sigma, \varsigma)$ are continuous and nonnegative functions for $ \tau, \sigma, \varsigma \geq \tau_{0}$ and are nondecreasing with respect to $\tau,\,\, \beta_{k}(\tau)\,(k\in \mathbb{N})$ are nondecreasing for $\tau\geq \tau_{0}$. Then, for $\tau\geq \tau_{0}$, the following inequality is valid:
\begin{small}
\begin{align*}
u(\tau)&\leq a(\tau)\underset{\tau_{0}<\tau_{k}<\tau}{\prod}\left( 1+\beta_{k}(\tau)\right) \exp \left( \int_{\tau_{0}}^\tau b(\tau, \sigma )d\sigma+\int_{\tau_{0}}^\tau\int_{\tau_{0}}^\sigma k(\tau, \sigma, \varsigma)d\varsigma d\sigma\right).
\end{align*}
\end{small}
\end{theorem}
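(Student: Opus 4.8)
The plan is to reduce the mixed (Volterra--Fredholm) inequality to the purely Volterra situation already treated in Lemma \ref{L1}, the main device being Dirichlet's interchange of the order of integration together with the monotonicity hypotheses. First I would fix an arbitrary $T\geq\tau_{0}$ and restrict attention to $\tau\in[\tau_{0},T]$. Since $a$, the $\beta_{k}$, and the kernels $b(\cdot,\sigma)$ and $k(\cdot,\sigma,\varsigma)$ are all nondecreasing in their first argument, replacing the running variable $\tau$ by the fixed $T$ inside these functions can only enlarge the right-hand side, so the hypothesis becomes
\begin{align*}
u(\tau)\leq a(T)+\int_{\tau_{0}}^{\tau} b(T,\sigma)u(\sigma)\,d\sigma+\int_{\tau_{0}}^{\tau}\!\int_{\tau_{0}}^{\sigma} k(T,\sigma,\varsigma)u(\varsigma)\,d\varsigma\,d\sigma+\underset{\tau_{0}<\tau_{k}<\tau}{\sum}\beta_{k}(T)\,u(\tau_{k}),
\end{align*}
valid for all $\tau\in[\tau_{0},T]$, where now $a(T)$ and the $\beta_{k}(T)$ are constants.

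The key step is to collapse the double integral into a single one. Interchanging the order of integration over the region $\tau_{0}\leq\varsigma\leq\sigma\leq\tau$ gives, after relabelling the dummy variables,
\begin{align*}
\int_{\tau_{0}}^{\tau}\!\int_{\tau_{0}}^{\sigma} k(T,\sigma,\varsigma)u(\varsigma)\,d\varsigma\,d\sigma=\int_{\tau_{0}}^{\tau} u(\sigma)\left(\int_{\sigma}^{\tau} k(T,s,\sigma)\,ds\right)d\sigma.
\end{align*}
Adding this to the first integral lets me introduce the combined kernel
\begin{align*}
g_{T}(\tau,\sigma):=b(T,\sigma)+\int_{\sigma}^{\tau} k(T,s,\sigma)\,ds,
\end{align*}
so that the frozen inequality takes the single-integral form $u(\tau)\leq a(T)+\int_{\tau_{0}}^{\tau} g_{T}(\tau,\sigma)u(\sigma)\,d\sigma+\sum_{\tau_{0}<\tau_{k}<\tau}\beta_{k}(T)u(\tau_{k})$. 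Here I would check that $g_{T}$ satisfies the hypotheses of Lemma \ref{L1}: it is continuous and nonnegative because $b,k\geq0$, and it is nondecreasing in $\tau$ for each fixed $\sigma$, since enlarging the upper limit of $\int_{\sigma}^{\tau} k(T,s,\sigma)\,ds$ only adds a nonnegative quantity.

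Applying Lemma \ref{L1} with the constant (hence nondecreasing) data $\tilde{a}(\tau)\equiv a(T)$ and $\tilde{\beta}_{k}(\tau)\equiv\beta_{k}(T)$ and kernel $g_{T}$ yields, for $\tau\in[\tau_{0},T]$,
\begin{align*}
u(\tau)\leq a(T)\underset{\tau_{0}<\tau_{k}<\tau}{\prod}\bigl(1+\beta_{k}(T)\bigr)\exp\!\left(\int_{\tau_{0}}^{\tau} g_{T}(\tau,\sigma)\,d\sigma\right).
\end{align*}
Evaluating at $\tau=T$ and reversing the interchange of integration to rewrite $\int_{\tau_{0}}^{T}\int_{\sigma}^{T} k(T,s,\sigma)\,ds\,d\sigma$ back as $\int_{\tau_{0}}^{T}\int_{\tau_{0}}^{\sigma} k(T,\sigma,\varsigma)\,d\varsigma\,d\sigma$, I recover exactly the asserted bound at the point $T$. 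Since $T\geq\tau_{0}$ was arbitrary, renaming $T$ as $\tau$ completes the argument. I expect the only genuine obstacle to be bookkeeping: keeping the frozen parameter $T$ rigorously separate from the running variable $\tau$ while verifying monotonicity of $g_{T}$, and ensuring the two Fubini interchanges are applied in matching directions so that the double-integral term is correctly restored inside the exponential.
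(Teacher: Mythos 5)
Your proof is correct, but it does not correspond to an argument in the paper, because the paper never proves Theorem \ref{LF1}: the result is imported verbatim from \cite{Hristova} (Theorem 2), and the nearest in-paper argument is the proof of its mixed extension, Theorem \ref{Th3}. That proof reduces to Lemma \ref{L1} by a different device than yours: it denotes the entire right-hand side by $\mathscr V(\tau)$, observes that $\mathscr V$ is nondecreasing with $u\le \mathscr V$, and then replaces $\mathscr V(\varsigma)$ by $\mathscr V(\sigma)$ inside the inner integral, thereby absorbing the inner integration into the kernel $b(\tau,\sigma)+\int_{\tau_0}^{\sigma}k(\tau,\sigma,\varsigma)\,d\varsigma$; no interchange of integration order and no freezing of $\tau$ are needed, since Lemma \ref{L1} already tolerates $\tau$-dependent nondecreasing data. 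You instead keep $u$ itself, use the Dirichlet/Fubini interchange to produce the kernel $b(T,\sigma)+\int_{\sigma}^{\tau}k(T,s,\sigma)\,ds$, and freeze the monotone data at $T$; both reductions invoke exactly Lemma \ref{L1} and, after undoing the interchange, give the identical bound, so yours is a legitimate alternative. The trade-off is this: the paper's majorant trick is the one that (at least formally) carries over to the Fredholm term of Theorem \ref{Th3}, whereas your interchange applied to $\int_{\tau_0}^{\tau}\int_{\tau_0}^{b}k_2(\tau,\sigma,\varsigma)u(\varsigma)\,d\varsigma\,d\sigma$ yields $\int_{\tau_0}^{b}u(\varsigma)\left(\int_{\tau_0}^{\tau}k_2(\tau,\sigma,\varsigma)\,d\sigma\right)d\varsigma$, an integral of $u$ over all of $[\tau_0,b]$ that is no longer of Volterra type and falls outside Lemma \ref{L1}; conversely, your route never replaces $u$ by a larger function, and in the pure Volterra setting every step is fully justified, while the paper's substitution $\mathscr V(\varsigma)\le\mathscr V(\sigma)$ is valid only when $\varsigma\le\sigma$. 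Two cosmetic repairs to your write-up: (i) your $g_T(\tau,\sigma)$ can be negative when $\sigma>\tau$, while Lemma \ref{L1} as stated asks for nonnegativity on the whole quadrant; take $g_T(\tau,\sigma)=b(T,\sigma)+\int_{\min(\sigma,\tau)}^{\tau}k(T,s,\sigma)\,ds$, since only $\sigma\le\tau$ ever enters; (ii) your frozen inequality holds only on $[\tau_0,T]$ while Lemma \ref{L1} is phrased on $[\tau_0,\infty)$ --- harmless because the bound at $\tau$ depends only on the hypothesis on $[\tau_0,\tau]$, but it deserves a sentence; in fact the freezing is avoidable altogether, as $b(\tau,\sigma)+\int_{\sigma}^{\tau}k(\tau,s,\sigma)\,ds$ is itself nondecreasing in $\tau$.
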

We need the following theorem from Pazy \cite{Pazy}. 
\begin{theorem} [\cite{{Pazy}}] \label{th2.1}. Let $\{\mathscr T(\tau)\}_{\tau\geq 0}$  is a $C_0$-semigroup. There exists constants $\omega\geq 0$ and $ \mathfrak{M}\geq 1$
such that $$\|\mathscr T(\tau)\|\leq \mathfrak{M}e^{\omega \tau},   ~0\leq \tau < \infty . $$
\end{theorem}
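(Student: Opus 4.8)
The plan is to establish the growth bound in two stages: first prove that $\|\mathscr T(\tau)\|$ is bounded on a short interval $[0,\eta]$, and then propagate this local bound to all of $[0,\infty)$ by means of the semigroup law. The constant $\mathfrak M$ will be extracted from the local bound and $\omega$ from the corresponding exponential rate.

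For the first stage I would argue by contradiction, invoking the uniform boundedness (Banach--Steinhaus) principle. Suppose no such $\eta$ and finite bound exist; then one can select a sequence $\tau_{n}\to 0^{+}$ with $\|\mathscr T(\tau_{n})\|\to\infty$. The strong continuity of the $C_0$-semigroup at the origin gives $\mathscr T(\tau_{n})x\to \mathscr T(0)x=x$ for every $x\in X$, so the family $\{\mathscr T(\tau_{n})\}$ is pointwise bounded on $X$. Since $X$ is a Banach space, Banach--Steinhaus then forces $\sup_{n}\|\mathscr T(\tau_{n})\|<\infty$, contradicting the choice of $\tau_{n}$. Hence there exists $\eta>0$ with $\mathfrak M:=\sup_{0\leq \tau\leq \eta}\|\mathscr T(\tau)\|<\infty$, and because $\mathscr T(0)=I$ we automatically have $\mathfrak M\geq 1$.

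For the second stage, set $\omega:=\eta^{-1}\log \mathfrak M\geq 0$ and, given any $\tau\geq 0$, write $\tau=n\eta+r$ with $n\in\mathbb{N}\cup\{0\}$ and $0\leq r<\eta$. The semigroup property $\mathscr T(\tau)=\mathscr T(r)\,\mathscr T(\eta)^{n}$, combined with submultiplicativity of the operator norm, yields $\|\mathscr T(\tau)\|\leq \mathfrak M\cdot \mathfrak M^{\,n}$. Since $n\leq \tau/\eta$, we have $\mathfrak M^{\,n}=e^{n\log \mathfrak M}\leq e^{\omega\tau}$, and therefore $\|\mathscr T(\tau)\|\leq \mathfrak M\,e^{\omega\tau}$ for all $\tau\geq 0$, which is precisely the asserted estimate.

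The only delicate point is the local boundedness step: strong continuity by itself supplies merely pointwise information, and upgrading this to a uniform operator-norm bound genuinely relies on the completeness of $X$ through the uniform boundedness principle. Once that is in hand, the remainder is routine bookkeeping with the semigroup identity and elementary estimates for the exponential.
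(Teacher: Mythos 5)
Your proof is correct, and there is nothing in the paper to compare it against: the paper states this result without proof, as a citation to Pazy's monograph. Your argument — local boundedness of $\|\mathscr T(\tau)\|$ near $\tau=0$ via the uniform boundedness principle and strong continuity, followed by propagation through the semigroup law with $\mathfrak M=\sup_{0\leq\tau\leq\eta}\|\mathscr T(\tau)\|$ and $\omega=\eta^{-1}\log\mathfrak M$ — is precisely the classical proof given in that reference, so it is the canonical route.
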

\section{Existence and uniqueness }

Consider the following space
 \begin{align*}
&\Theta =\left\lbrace   w:J\to X :w(\tau) ~\text{is continuous at}\,  \tau \not= \tau_{k},\text{left continuous at}\, \tau=\tau_{k}, \right.\\
& \left.\text{the right limit}~w(\tau^{+}_{k})~\text{exists for} ~k =1, \cdots n,\,n\in \mathbb{N} ~\text{and}~ w(0)=w_{0}\right\rbrace.
\end{align*}
Consider the following Banach space $\Theta_{\mathcal{PB}}=\left(\Theta,\left\| \cdot\right\|_{\mathcal{PB}} \right)$,
where 
\begin{small}
$$ \left\| w\right\|_{\mathcal{PB}}=\underset{\tau\in J}{\sup}\left\lbrace \frac{\left\| w(\tau) \right\|}{e^{\gamma \tau}} \right\rbrace , w\in \Theta,\,\gamma>0,$$
\end{small}
is the piece-wise Bielecki norm, and $\Theta_{\mathcal{PC}}=\left(\Theta,\left\| \cdot\right\|_{\mathcal{PC}} \right)$, where $\left\|w \right\|_{\mathcal{PC}}=\underset{t\in J}{\sup}\left\lbrace \left\| w(\tau) \right\| \right\rbrace,\, w\in \Theta$ is the piece-wise Chebyshev norm.

\begin{definition}
A function $ w\in \Theta $ is called a mild solution of \eqref{F1}-\eqref{0F3} if it satisfies the following  impulsive integral equation 
\begin{small}
\begin{align}\label{F0}
 w(\tau)=&\mathscr T(\tau)w_{0}+\int_0^\tau \mathscr T(\tau-\sigma)  G\left(\sigma, w(\sigma), \int_0^\sigma F_{1}(\sigma,\varsigma,w(\varsigma))d\varsigma,\int_0^b  F_{2}(\sigma,\varsigma,w(\varsigma))d\varsigma \right)d\sigma \nonumber\\
& \qquad +\underset{0<\tau_{k}<\tau}{\sum} \mathscr T(\tau-\tau_{k}) I_{k}\left(w(\tau_{k})\right),~ \tau\in J.
\end{align}
\end{small}
\end{definition}

We need the following hypothesis to obtain our main results.
\begin{itemize}
\item[(H1)]
Let $G:J\times X \times X\times X\to X$ be continuous function and there exist constant $L_{G}>0$ such that
\begin{small} 
$$ \|G(\tau, v_{1},v_{2},v_{3})-G(\tau, w_{1},w_{2},w_{3})\| \leq L_{G} \left( \sum_{i=1}^{3}\|v_{i}- w_{i}\| \right) ,$$
\end{small}
for all $ \tau \in J$ and $v_{i},w_{i}\in X \, (i=1,2,3) .$
\item[(H2)]Let $F_{j}\,\,(j=1,2): J\times J\times X\to X$  are continuous functions and there exist constants $L_{F_{j}}\,\,(j=1,2)>0$ such that
\begin{small} 
$$ \|F_{j}(\tau, \sigma,v_{1})-F_{j}(\tau,\sigma,w_{1})\| \leq L_{F_{j}} \|v_{1}- w_{1}\|,\,\, j=1,2  ;$$
\end{small}
for all $ \tau,\sigma\in J$ and $v_{1},w_{1}\in X .$
\item[(H3)]

There exist constant $L_{I_{k}} >0 $ such that $\left\| I_{k}(v)-I_{k}(w)\right\| \leq L_{I_{k}} \left\|v-w\right\|;$ for $v,w\in X,\,(k=1,\cdots, n)$.
\end{itemize}
\begin{theorem}\label{thm1}

Suppose that hypothesis {\rm (H1)-(H3)} are holds and there exist constant $\gamma>0 $ such that
\begin{small}
$$L_{\mathcal{R}}=\frac{\mathscr M\,L_{G}}{\gamma}\left[ \left( 1-e^{-\gamma\,b}\right) \left( 1+ \frac{L_{F_{1}}}{\gamma}\right) + L_{F_{2}} \,b\, e^{\gamma\,b}\right] +\mathscr M\, e^{\gamma b} \sum_{k=1}^{n}L_{I_{k}}  <1.$$
\end{small}

 Then the {\rm VFIIDE}  \eqref{F1}-\eqref{0F3} has a unique solution in $\Theta_{\mathcal{PB}}$. 
\end{theorem}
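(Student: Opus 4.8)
The plan is to recast the integral equation \eqref{F0} as a fixed point problem and invoke the contraction principle in the complete space $\Theta_{\mathcal{PB}}$. Define the operator $\mathcal A:\Theta\to\Theta$ by letting $(\mathcal A w)(\tau)$ equal the right-hand side of \eqref{F0}; a mild solution is then exactly a fixed point of $\mathcal A$. First I would verify that $\mathcal A$ is well defined, i.e. that $\mathcal A w\in\Theta$ whenever $w\in\Theta$: the continuity of $G,F_1,F_2$ together with the strong continuity of $\{\mathscr T(\tau)\}_{\tau\ge0}$ makes the convolution term continuous on $J$, the finite jump sum $\sum_{0<\tau_k<\tau}\mathscr T(\tau-\tau_k)I_k(w(\tau_k))$ is left continuous at each $\tau_k$ and possesses right limits there, and $(\mathcal A w)(0)=w_0$, so all the defining requirements of $\Theta$ hold.

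The heart of the argument is to show that $\mathcal A$ is a contraction on $\Theta_{\mathcal{PB}}$ with constant $L_{\mathcal R}$. Fix $w,\tilde w\in\Theta$ and write $\mathscr M=\sup_{\tau\in J}\|\mathscr T(\tau)\|$, which is finite by Theorem \ref{th2.1}. Subtracting the two expressions for $\mathcal A$ and using (H1)--(H3), I would split $\|(\mathcal A w)(\tau)-(\mathcal A\tilde w)(\tau)\|$ into four contributions: the pointwise term $\|w(\sigma)-\tilde w(\sigma)\|$, the Volterra term $\int_0^\sigma L_{F_1}\|w(\varsigma)-\tilde w(\varsigma)\|\,d\varsigma$, the Fredholm term $\int_0^b L_{F_2}\|w(\varsigma)-\tilde w(\varsigma)\|\,d\varsigma$, and the impulse term. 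The Bielecki bound $\|w(\sigma)-\tilde w(\sigma)\|\le e^{\gamma\sigma}\|w-\tilde w\|_{\mathcal{PB}}$ together with $\int_0^\tau e^{\gamma\sigma}\,d\sigma=\frac{e^{\gamma\tau}-1}{\gamma}$ handles the first two terms, producing, after division by $e^{\gamma\tau}$ and the crude estimate $1-e^{-\gamma\tau}\le 1-e^{-\gamma b}$, exactly the factor $\frac{\mathscr M L_G}{\gamma}(1-e^{-\gamma b})(1+\frac{L_{F_1}}{\gamma})$. For the impulse term, $\|w(\tau_k)-\tilde w(\tau_k)\|\le e^{\gamma\tau_k}\|w-\tilde w\|_{\mathcal{PB}}\le e^{\gamma b}\|w-\tilde w\|_{\mathcal{PB}}$ and $e^{-\gamma\tau}\le 1$ yield the summand $\mathscr M e^{\gamma b}\sum_{k=1}^n L_{I_k}$.

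The step I expect to be the main obstacle is the Fredholm term, because its inner integral runs over the whole interval $[0,b]$ and therefore does not enjoy the causal decay that makes the Bielecki device effective for Volterra terms. Here I would bound $\int_0^b L_{F_2}\|w(\varsigma)-\tilde w(\varsigma)\|\,d\varsigma\le L_{F_2}\frac{e^{\gamma b}-1}{\gamma}\|w-\tilde w\|_{\mathcal{PB}}$, treating it as a constant in $\sigma$, integrate the resulting constant against $\int_0^\tau\mathscr T(\tau-\sigma)\,d\sigma$, and then use the harmless estimates $\tau e^{-\gamma\tau}\le b$ and $\frac{e^{\gamma b}-1}{\gamma}\le\frac{e^{\gamma b}}{\gamma}$ to arrive at the contribution $\frac{\mathscr M L_G}{\gamma}L_{F_2}\,b\,e^{\gamma b}$. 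Summing the four contributions gives $\|\mathcal A w-\mathcal A\tilde w\|_{\mathcal{PB}}\le L_{\mathcal R}\|w-\tilde w\|_{\mathcal{PB}}$ with $L_{\mathcal R}<1$ by hypothesis. Consequently $\mathcal A$ is a contraction, hence a Picard operator, so $\mathbf F_{\mathcal A}=\{w^\ast\}$ is a singleton and $w^\ast$ is the unique mild solution of \eqref{F1}--\eqref{0F3} in $\Theta_{\mathcal{PB}}$.
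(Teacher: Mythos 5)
Your proposal is correct and follows essentially the same route as the paper: recast \eqref{F0} as a fixed point problem, estimate the four contributions (pointwise, Volterra, Fredholm, impulse) in the Bielecki norm using the semigroup bound $\mathscr M$ from Theorem \ref{th2.1} and hypotheses (H1)--(H3), and conclude by the Banach contraction principle with exactly the constant $L_{\mathcal R}$. The only difference is that you additionally verify that the operator maps $\Theta$ into itself, a step the paper leaves implicit; your term-by-term estimates (including the treatment of the Fredholm term via $\tau e^{-\gamma\tau}\le b$) reproduce the paper's bounds.
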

\begin{proof}

Define the operator $\mathcal{R}: \Theta_{\mathcal{PB}}\to \Theta_{\mathcal{PB}},\,\,\Theta_{\mathcal{PB}}=(\Theta, \left\| \cdot\right\|_{\mathcal{PB}})$ by 
\begin{small}
\begin{align*}
\mathcal{R}(w)(\tau)&=\mathscr T(\tau)w_{0}+\int_0^\tau \mathscr T(\tau-\sigma)  G\left(\sigma, w(\sigma), \int_0^\sigma  F_{1}(\sigma,\varsigma,w(\varsigma))d\varsigma,\int_0^b  F_{2}(\sigma,\varsigma,w(\varsigma))d\varsigma \right)d\sigma \\
& \qquad +\underset{0<\tau_{k}<\tau}{\sum} \mathscr T(\tau-\tau_{k}) I_{k}\left(w(\tau_{k})\right),~ \tau\in [0,b].
\end{align*}
\end{small}

Then fixed point of the operator $ \mathcal{R}$ is the solution of the problem \eqref{F1}-\eqref{0F3}. Let any $w,v \in \Theta $ and $\tau\in [0,b]$. Then
\begin{small}
\begin{align}\label{F3}
&\left\|\mathcal{R}(w)(\tau)-\mathcal{R}(v)(\tau) \right\|\nonumber\\
&\leq\int_0^\tau \left\| \mathscr T(\tau-\sigma)\right\|_{B(X)}   \left\| G\left(\sigma, w(\sigma), \int_0^\sigma  F_{1}(\sigma,\varsigma,w(\varsigma))d\varsigma,\int_0^b  F_{2}(\sigma,\varsigma,w(\varsigma))d\varsigma \right)\right.\nonumber\\
&\left.\qquad-G\left(\sigma, v(\sigma), \int_0^\sigma  F_{1}(\sigma,\varsigma,v(\varsigma))d\varsigma,\int_0^b  F_{2}(\sigma,\varsigma,v(\varsigma))d\varsigma \right)\right\| d\sigma \nonumber\\
& \qquad\qquad +\underset{0<\tau_{k}<\tau}{\sum}\left\|  \mathscr T(\tau-\tau_{k})\right\|_{B(X)}  \left\| I_{k}\left(w(\tau_{k})\right)-I_{k}\left(v(\tau_{k})\right)\right\|.
\end{align}
\end{small}

By the Theorem \ref{th2.1} there exist constant $\mathscr M\geq 1 $ such that
\begin{align}
\left\| \mathscr{T}(\tau)\right\| _{B(X)} \leq \mathscr M,\, \tau\geq 0. \label{F4}
\end{align}

Using hypothesis (H1)-(H3) and the condition \eqref{F4} to the inequality \eqref{F3}, we have \begin{small}
\begin{align*}
&\left\|\mathcal{R}(w)(\tau)-\mathcal{R}(v)(\tau) \right\|\nonumber\\
&\leq\int_0^\tau \mathscr M\,L_{G} \left[\left\| w(\sigma)-v(\sigma)\right\|  e^{-\gamma\,\sigma}\right] e^{\gamma\,\sigma} d\sigma +\int_0^\tau \int_0^\sigma \mathscr M\,L_{G}\,L_{F_{1}} \left[\left\| w(\varsigma)-v(\varsigma)\right\|  e^{-\gamma\,\varsigma}\right] e^{\gamma\,\varsigma} d\varsigma d\sigma  \\
&\qquad +\int_0^\tau \int_0^b \mathscr M\,L_{G}\,L_{F_{2}} \left[\left\| w(\varsigma)-v(\varsigma)\right\|  e^{-\gamma\,\varsigma}\right] e^{\gamma\,\varsigma} d\varsigma d\sigma +\underset{0<\tau_{k}<\tau}{\sum} \mathscr M\,L_{I_{k}} \left[\left\| w(\tau_{k})-v(\tau_{k})\right\|  e^{-\gamma\,\tau_{k}}\right] e^{\gamma\,\tau_{k}}\\
&\leq\int_0^\tau \mathscr M\,L_{G} \left\| w-v\right\|_{\mathcal{PB}} e^{\gamma\,\sigma} d\sigma +\int_0^\tau \int_0^\sigma \mathscr M\,L_{G}\,L_{F_{1}} \left\| w-v\right\|_{\mathcal{PB}} e^{\gamma\,\varsigma} d\varsigma d\sigma \\
&\qquad +\int_0^\tau \int_0^b \mathscr M\,L_{G}\,L_{F_{2}} \left\| w-v\right\|_{\mathcal{PB}} e^{\gamma\,\varsigma} d\varsigma d\sigma+ \mathscr M \sum_{k=1}^{n}L_{I_{k}} e^{\gamma \tau_{k}}\,L_{I_{k}} \left\| w-v\right\|_{\mathcal{PB}} \\
&= \left\lbrace \mathscr M\,L_{G} \left(\frac{e^{\gamma \tau}}{\gamma}- \frac{1}{\gamma}\right)+\mathscr M\,L_{G} L_{F_{1}} \left(\frac{e^{\gamma \tau}}{\gamma^{2}}- \frac{1}{\gamma^{2}}-\frac{\tau}{\gamma}\right)\right.\\
&\qquad \qquad \left.+\mathscr M\,L_{G} L_{F_{2}} \left(\frac{e^{\gamma b}}{\gamma}- \frac{1}{\gamma}\right)\tau+\mathscr M\,\, \sum_{k=1}^{n} e^{\gamma \tau_{k}}\,L_{I_{k}}  \right\rbrace\left\|w-v \right\|_{\mathcal{PB}}\\
& \leq  \left\lbrace \mathscr M\,L_{G} \left(\frac{e^{\gamma \tau}}{\gamma}- \frac{1}{\gamma}\right)+\mathscr M\,L_{G} L_{F_{1}} \left(\frac{e^{\gamma \tau}}{\gamma^{2}}- \frac{1}{\gamma^{2}}\right)\right. \left.+\mathscr M\,L_{G} L_{F_{2}}\, b \frac{e^{\gamma b}}{\gamma} +\mathscr M\, e^{\gamma b}  \sum_{k=1}^{n}L_{I_{k}}  \right\rbrace\left\|w-v \right\|_{\mathcal{PB}},\tau \in J.
\end{align*}
\end{small}

Thus
\begin{small}
\begin{align*}
\left\|\mathcal{R}(w)(\tau)-\mathcal{R}(v)(\tau) \right\| e^{-\gamma \tau} & \leq \left\lbrace \mathscr M\,L_{G} \frac{1-e^{-\gamma \tau}}{\gamma}+\mathscr M\,L_{G} L_{F_{1}} \frac{1-e^{-\gamma \tau}}{\gamma^{2}}\right.\\
&\qquad \qquad\left.+\mathscr M\,L_{G} L_{F_{2}}\,  \frac{b}{\gamma} e^{\gamma b} e^{-\gamma \tau}+\mathscr M\, e^{\gamma b}\,e^{-\gamma \tau}\sum_{k=1}^{n}L_{I_{k}} \right\rbrace\left\|w-v \right\|_{\mathcal{PB}}\\
& \leq \left\lbrace \mathscr M\,L_{G} \frac{1-e^{-\gamma \tau}}{\gamma}+\mathscr M\,L_{G} L_{F_{1}} \frac{1-e^{-\gamma \tau}}{\gamma^{2}}\right.\\
&\qquad \qquad\left.+\mathscr M\,L_{G} L_{F_{2}}\,  \frac{b}{\gamma} e^{\gamma b} +\mathscr M\, e^{\gamma b}\sum_{k=1}^{n}L_{I_{k}} \right\rbrace\left\|w-v \right\|_{\mathcal{PB}}.
\end{align*}
\end{small}

Therefore
\begin{small}
\begin{align*}
\left\|\mathcal{R}(w)-\mathcal{R}(v) \right\|_{\mathcal{PB}}
&=\underset{\tau\in J}{\sup}\left\lbrace \frac{\left\|\mathcal{R}(w)(\tau)-\mathcal{R}(v)(\tau) \right\|}{e^{\gamma \tau}} \right\rbrace \\
&\leq\left( \frac{\mathscr M\,L_{G}}{\gamma}\left[ \left( 1-e^{-\gamma\,b}\right) \left( 1+ \frac{L_{F_{1}}}{\gamma}\right) + L_{F_{2}} \,b\, e^{\gamma\,b}\right] +\mathscr M\, e^{\gamma b}\sum_{k=1}^{n}L_{I_{k}} \right) \left\|w-v \right\|_{\mathcal{PB}}\\
&=L_{\mathcal{R}}\left\|w-v \right\|_{\mathcal{PB}}.
\end{align*}
\end{small}

Choose $\gamma>0$ such that 
\begin{small}
$$L_{\mathcal{R}}=\frac{\mathscr M\,L_{G}}{\gamma}\left[ \left( 1-e^{-\gamma\,b}\right) \left( 1+ \frac{L_{F_{1}}}{\gamma}\right) + L_{F_{2}} \,b\, e^{\gamma\,b}\right] +\mathscr M\, e^{\gamma b}\sum_{k=1}^{n}L_{I_{k}}<1.$$ 
\end{small}
Then $\mathcal{R}$ is contraction operator. By Banach fixed point theorem it  has a fixed point $\bar{w}\in  \Theta_{\mathcal{PB}}$ which is unique solution of VFIIDE  \eqref{F1}-\eqref{0F3}.
\end{proof}

\section{Dependency of solutions via PO}
In this section, we analyse the dependency of solutions on the initial condition and functions in the equations by means of Picard operator theory. 

Consider the following problem
\begin{small}
\begin{align}\label{F5}
& w'(\tau)=\mathscr Aw(\tau)+\widehat{G}\left(\tau, w(\tau), \int_0^\tau \widehat{F}_1(\tau,\sigma,w(\sigma))d\sigma,\int_0^b  \widehat{F}_2(\tau,\sigma,w(\sigma))d\sigma\right),\nonumber\\
& \tau \in J,\tau \not= \tau_{k},k=1,2,\cdots,n,\\
& w(0) = \widehat{w}_{0},\, \widehat{w}_{0}\in X \label{F6}\\
& \Delta w(\tau_{k})=\hat{I}_{k}(w(\tau_{k})),~k=1,2,\cdots,n ,\label{0F7}
\end{align}
\end{small}
where  $ \widehat{G}:J\times X \times X\times X\to X,\,\widehat{F}_j\,(j=1,2):J\times J\times X\to X\,\text{and}\,\hat{I}_{k}:X\to X\,( k=1, \cdots n)$ are the continuous functions.

A function $ w\in \Theta $ is called a mild solution of \eqref{F5}-\eqref{0F7} if it satisfies the following  impulsive integral equation 
\begin{small}
\begin{align}\label{F00}
w(\tau)=&\mathscr T(\tau)\widehat{w}_{0}+\int_0^\tau \mathscr T(\tau-\sigma)  \widehat{G}\left(\sigma, w(\sigma), \int_0^\sigma \widehat{F}_1(\sigma,\varsigma,w(\varsigma))d\varsigma,\int_0^b  \widehat{F}_2(\sigma,\varsigma,w(\varsigma))d\varsigma \right)d\sigma \nonumber\\
& \qquad +\underset{0<\tau_{k}<\tau}{\sum} \mathscr T(\tau-\tau_{k}) \hat{I}_{k}\left(w(\tau_{k})\right),~ \tau\in [0,b].
\end{align}
\end{small}
\begin{theorem}\label{thm2}

Suppose that the following  conditions are holds
\item{\rm (A1)}
All the conditions in Theorem \ref{thm1} are satisfied and $ w^{\ast}\in \Theta$ is the unique solution of the integral equation \eqref{F0}.
\item{\rm(A2)}
There exists constants $L_{\widehat{G}}, L_{ \widehat{F}_{1}},L_{ \widehat{F}_{2}}> 0$  such that
\begin{small}
 $$ \left\|\widehat{G} (\tau,w_{1},w_{2},w_{3})-\widehat{G}(\tau,v_{1},v_{2},v_{3}) \right\|\leq L_{\widehat{G}} \left(\sum_{i=1}^{3} \left\| w_{i}-v_{i}\right\| \right) $$
\end{small}
and
\begin{small}
$$ \left\|L_{ \widehat{F}_{j}}(\tau,\sigma,w_{1})-L_{ \widehat{F}_{j}}(\tau,\sigma,v_{1}) \right\|\leq L_{L_{ \widehat{F}_{j}}} \left( \left\| w_{1}-v_{1}\right\| \right), \, j=1,2; $$
\end{small}
for all $\tau,\sigma \in J$ and $w_{i},v_{i}\in X\, (i=1,2,3)$.
\item{\rm (A3)} There exist constant $L_{\hat{I}_{k}}  $ such that $\left\| \hat{I}_{k}(w)-\hat{I}_{k}(v)\right\| \leq L_{\hat{I}_{k}} \left\|v-w\right\|;$ for $v,w\in X$.
\item{\rm(A4)} There exists a constants $\mu,\,\eta >0 $ such that
$$ \left\|G(\tau,u,v,w)-\widehat{G} (\tau,u,\tilde{v},\tilde{w}) \right\|\leq \mu  $$
and
$$\left\| I_{k}(w)-\hat{I}_{k}(w)\right\| \leq \eta ;$$ 
for all $\tau \in J$ and $u,\,v,\,\tilde{v},\,\,w,\,\tilde{w}\in X$.

Then, if $v^{\ast}$ is the solution of integral equations \eqref{F00} then 
\begin{small}
\begin{align}\label{f0}
 \left\|w^{\ast}-v^{\ast} \right\|_{\mathcal{PB}}\leq \frac{\mathscr M \left\| w_{0}-\hat{w}_{0}\right\| +b\,\mathscr M\, \mu + n\, \mathscr M\, \eta }{1-L_{\mathcal{R}}}. 
\end{align}
\end{small}
\end{theorem}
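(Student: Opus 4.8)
The plan is to read the estimate \eqref{f0} as a direct instance of the abstract Picard-operator data-dependence result of Theorem \ref{B1}. Both integral equations \eqref{F0} and \eqref{F00} are fixed-point equations on the same complete metric space $\Theta_{\mathcal{PB}}$: equation \eqref{F0} for the operator $\mathcal{R}$ of Theorem \ref{thm1}, built from the unperturbed data $G,F_1,F_2,I_k,w_0$, and equation \eqref{F00} for the operator $\widehat{\mathcal{R}}$ defined by its right-hand side, built from the perturbed data $\widehat G,\widehat F_1,\widehat F_2,\hat I_k,\widehat w_0$. I would therefore apply Theorem \ref{B1} with $\mathcal{A}=\mathcal{R}$ and $\mathcal{B}=\widehat{\mathcal{R}}$ and verify its three hypotheses in turn.

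First I would dispose of (a) and (b). By assumption (A1) together with Theorem \ref{thm1}, the operator $\mathcal{R}$ is a contraction on $\Theta_{\mathcal{PB}}$ with constant $\alpha=L_{\mathcal{R}}<1$ and $\mathbf{F}_{\mathcal{R}}=\{w^\ast\}$, which is (a). Conditions (A2)--(A3) guarantee that $\widehat{\mathcal{R}}$ is well defined on $\Theta$ and, via the contraction argument of Theorem \ref{thm1} applied to the perturbed data, possesses the fixed point $v^\ast$ solving \eqref{F00}; hence $w^\ast_{\mathcal{B}}=v^\ast\in\mathbf{F}_{\widehat{\mathcal{R}}}$, giving (b).

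The substance of the argument is hypothesis (c): a uniform bound $\rho$ on $d(\mathcal{R}(w),\widehat{\mathcal{R}}(w))=\|\mathcal{R}(w)-\widehat{\mathcal{R}}(w)\|_{\mathcal{PB}}$ valid for every $w\in\Theta_{\mathcal{PB}}$. Subtracting the right-hand sides of \eqref{F0} and \eqref{F00} evaluated at a common $w$ leaves three differences: the initial-value term $\mathscr T(\tau)(w_0-\widehat w_0)$, the nonlinearity term $\int_0^\tau \mathscr T(\tau-\sigma)[G(\cdots)-\widehat G(\cdots)]\,d\sigma$, and the impulse term $\sum_{0<\tau_k<\tau}\mathscr T(\tau-\tau_k)[I_k(w(\tau_k))-\hat I_k(w(\tau_k))]$. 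Taking norms, invoking the semigroup bound $\|\mathscr T(\cdot)\|_{B(X)}\leq\mathscr M$ of \eqref{F4}, the closeness estimates $\|G(\cdots)-\widehat G(\cdots)\|\leq\mu$ and $\|I_k-\hat I_k\|\leq\eta$ from (A4), and finally $\tau\leq b$ together with the fact that at most $n$ impulse points lie in $(0,\tau)$, I obtain
\[
\|\mathcal{R}(w)(\tau)-\widehat{\mathcal{R}}(w)(\tau)\|\leq \mathscr M\|w_0-\widehat w_0\|+b\,\mathscr M\,\mu+n\,\mathscr M\,\eta .
\]
Since this bound does not depend on $\tau$, dividing by $e^{\gamma\tau}\geq 1$ and taking the supremum over $\tau\in J$ cannot increase it, whence $\|\mathcal{R}(w)-\widehat{\mathcal{R}}(w)\|_{\mathcal{PB}}\leq\rho:=\mathscr M\|w_0-\widehat w_0\|+b\,\mathscr M\,\mu+n\,\mathscr M\,\eta$ for every $w$, which is hypothesis (c). Theorem \ref{B1} then delivers $\|w^\ast-v^\ast\|_{\mathcal{PB}}\leq\rho/(1-\alpha)$, i.e. precisely \eqref{f0}.

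The calculation is essentially routine, and I expect the only delicate points to be bookkeeping rather than analysis. One must ensure that $\widehat{\mathcal{R}}$ genuinely admits the fixed point $v^\ast$, which is where conditions (A2)--(A3) and a smallness hypothesis on the perturbed data enter. One must also note that, in contrast to the contraction estimate of Theorem \ref{thm1}, no exponential weighting is required to close hypothesis (c): because the right-hand side is already free of $\tau$ after bounding $\tau\le b$ and the number of impulse points by $n$, the weight $e^{-\gamma\tau}\le 1$ only helps, and the Bielecki supremum collapses to the constant $\rho$.
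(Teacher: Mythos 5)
Your proposal is correct and follows essentially the same route as the paper: both apply the abstract data-dependence result (Theorem \ref{B1}) with $\mathcal{A}=\mathcal{R}$ and $\mathcal{B}$ the operator built from the perturbed data, establish the uniform bound $\rho=\mathscr M\|w_0-\widehat w_0\|+b\,\mathscr M\,\mu+n\,\mathscr M\,\eta$ by the same three-term decomposition using \eqref{F4} and (A4), and conclude by dividing by $1-L_{\mathcal{R}}$. Your remarks on the fixed point of the perturbed operator and on the harmlessness of the Bielecki weight match the paper's treatment (the paper likewise notes the perturbed operator is a contraction provided its own smallness constant $L_{\mathcal{T}}<1$).
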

\begin{proof}

Define the  operators $\mathcal{R}, \mathcal{T}: (\Theta, \left\| \cdot\right\|_{\mathcal{PB}}) \to (\Theta, \left\| \cdot\right\|_{\mathcal{PB}})$  defined by
\begin{small}
\begin{align*}
\mathcal{R}(w)(\tau)&=\mathscr T(\tau)w_{0}+\int_0^\tau \mathscr T(\tau-\sigma)  G\left(\sigma, w(\sigma), \int_0^\sigma  F_{1}(\sigma,\varsigma,w(\varsigma))d\varsigma,\int_0^b  F_{2}(\sigma,\varsigma,w(\varsigma))d\varsigma \right)d\sigma \\
& \qquad +\underset{0<\tau_{k}<\tau}{\sum} \mathscr T(\tau-\tau_{k}) I_{k}\left(w(\tau_{k})\right)
\end{align*}
\end{small}
and
\begin{small}
\begin{align*}
\mathcal{T} w(\tau)&=\mathscr T(\tau)\widehat{w}_{0}+\int_0^\tau \mathscr T(\tau-\sigma)  \widehat{G}\left(\sigma, w(\sigma), \int_0^\sigma \widehat{F}_1(\sigma,\varsigma,w(\varsigma))d\varsigma,\int_0^b  \widehat{F}_2(\sigma,\varsigma,w(\varsigma))d\varsigma \right)d\sigma \nonumber\\
& \qquad +\underset{0<\tau_{k}<\tau}{\sum} \mathscr T(\tau-\tau_{k}) \hat{I}_{k}\left(w(\tau_{k})\right).
\end{align*}
\end{small}
With (A1), it is already prove that  $\mathcal{R}$ is a contraction. On the similar line $\mathcal{T}$ is contraction provided that $$L_{\mathcal{T}}=\frac{\mathscr M\,L_{\widehat{G}}}{\gamma}\left[ \left( 1-e^{-\gamma\,b}\right) \left( 1+ \frac{L_{\widehat{F}_1}}{\gamma}\right) + L_{\widehat{F}_2} \,b\, e^{\gamma\,b}\right] +\mathscr M\,\, e^{\gamma\,b} \sum_{k=1}^{n}L_{\hat{I}_{k}}  <1.$$  Let $\mathbf{F}_{\mathcal{R}}=\{w^{\ast}\}$ and $\mathbf{F}_{\mathcal{T}}=\{v^{\ast}\}$. For any $w \in  \Theta$. Then any $\tau\in J$, we have
\begin{small}
\begin{align*}
&\left\|\mathcal{R}(w)(\tau)-\mathcal{T}(w)(\tau) \right\| \nonumber\\
&\leq \left\| \mathscr T(\tau)\right\|_{B(X)} \left\| w_{0}-\hat{w}_{0}\right\| +\int_{0}^{\tau} \left\| \mathscr T(\tau-\sigma)\right\|_{B(X)} \left\|  G\left(\sigma, w(\sigma), \int_0^\sigma  F_{1}(\sigma,\varsigma,w(\varsigma))d\varsigma,\right.\right.\nonumber\\
& \qquad\left.\left.\int_0^b  F_{2}(\sigma,\varsigma,w(\varsigma))d\varsigma \right)- \widehat{G}\left(\sigma, w(\sigma), \int_0^\sigma \widehat{F}_1(\sigma,\varsigma,w(\varsigma))d\varsigma,\int_0^b  \widehat{F}_2(\sigma,\varsigma,w(\varsigma))d\varsigma \right)\right\| \nonumber\\
& \qquad \qquad+\underset{0<\tau_{k}<\tau}{\sum} \left\| \mathscr T(\tau-\tau_{k}) \right\|_{B(X)}\left\|I_{k}\left(w(\tau_{k})\right)-\hat{I}_{k}\left(w(\tau_{k})\right) \right\| . 
\end{align*}
\end{small}

In the view of assumptions (A4), we have 
\begin{small}
\begin{align*}
\left\|\mathcal{R}(w)(\tau)-\mathcal{T}(w)(\tau) \right\|
 \leq \mathscr M \left\| w_{0}-\hat{w}_{0}\right\| +b\,\mathscr M\, \mu + n\, \mathscr M\, \eta.
\end{align*}
\end{small}

Therefore,
\begin{small}
\begin{align}\label{F8}
\left\|\mathcal{R}(w)-\mathcal{T}(w) \right\|_{\mathcal{PB}}
&=\underset{\tau\in J}{\sup}\left\lbrace \frac{\left\|\mathcal{R}(w)(\tau)-\mathcal{T}(w)(\tau) \right\|}{e^{\gamma \tau}} \right\rbrace \nonumber\\
&\le \mathscr M \left\| w_{0}-\hat{w}_{0}\right\| +b\,\mathscr M\, \mu + n\, \mathscr M\, \eta.
 \end{align}
 \end{small}
 
Applying the Theorem \ref{B1}, to the inequality \eqref{F8}, we obtain 
$$\left\|w^{\ast}-v^{\ast} \right\|_{\mathcal{PB}}\leq \frac{\mathscr M \left\| w_{0}-\hat{w}_{0}\right\| +b\,\mathscr M\, \mu + n\, \mathscr M\, \eta }{1-L_{\mathcal{R}}},$$
which is  desired inequality \eqref{f0}.
\end{proof}

\section{Extended version of integral inequality for piece-wise continuous functions}

In this section, firstly we extended the integral inequality given in the Theorem \ref{LF1} to the mixed case, so as the results related to dependency of solutions on different data can be obtained for mixed Volterra-Fredholm  integrodifferential equations with impulses.
\begin{theorem}\label{Th3}
Let $ \tau\in[0,b],$ the following integral inequality hold
\begin{small}
\begin{align}\label{v1}
u(\tau)&\leq a(\tau)+\int_0^\tau b(\tau,\sigma)u(\sigma)d\sigma+\int_0^\tau\left( \int_0^\sigma k_{1}(\tau, \sigma, \varsigma) u(\varsigma) d\varsigma \right)d\sigma+ \int_0^\tau\left( \int_0^b k_{2}(\tau, \sigma, \varsigma) u(\varsigma) d\varsigma \right)d\sigma\nonumber\\
&\qquad+\underset{0<\tau_{k}<\tau}{\sum} ~\beta_{k}(\tau)u(\tau_{k})
\end{align}
\end{small}
where $u,a\in PC([0,b],\mathbb{R}_{+}),$ $a$ is a nondecreasing, $b(\tau,\sigma), \,k_{1}(\tau, \sigma, \varsigma)$ and $\,k_{2}(\tau, \sigma, \varsigma)$ are continuous and nonnegative functions for $ \tau, \sigma, \varsigma \in [0,\tau]$ and are nondecreasing with respect to $\tau,\, \beta_{k}(\tau)\,\,(k\in \mathbb{N})$ are nondecreasing for $\tau\in [0,\tau].$ Then, for $\tau\in [0,\tau]$, the following inequality is valid:
\begin{small}
\begin{align}\label{v2}
u(\tau)&\leq a(\tau)\underset{0<\tau_{k}<\tau}{\prod}\left( 1+\beta_{k}(\tau)\right) \exp \left( \int_0^\tau b(\tau, \sigma, )d\sigma+\int_0^\tau\int_0^\sigma k_{1}(\tau, \sigma, \varsigma)d\varsigma d\sigma\right.\nonumber\\
&\left.\qquad\qquad\qquad\qquad\qquad+\int_0^\tau\int_0^b k_{2}(\tau, \sigma, \varsigma)d\varsigma d\sigma\right).
\end{align}
\end{small}
\end{theorem}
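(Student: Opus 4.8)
The plan is to adapt the comparison-function argument that underlies Lemma~\ref{L1} and Theorem~\ref{LF1}, carrying along the extra mixed (Fredholm-type) double integral as the only genuinely new ingredient. Since the right-hand side of \eqref{v1} is built from nonnegative kernels and a nondecreasing $a$, I would first fix the upper endpoint, exploit the monotonicity of $a$, $b$, $k_1$, $k_2$ and $\beta_k$ in the first variable to freeze that variable, and then denote by $z(\tau)$ the resulting right-hand side of \eqref{v1}. By construction $u(\tau)\le z(\tau)$, and $z$ is a nonnegative, nondecreasing, left-continuous function that is smooth on each impulse-free subinterval $(\tau_k,\tau_{k+1})$ and jumps at each $\tau_k$.

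Second, I would differentiate $z$ on an arbitrary impulse-free subinterval. The derivative produces three contributions: $b(\tau,\tau)u(\tau)$ from the single Volterra integral, $\int_0^\tau k_1(\tau,\tau,\varsigma)u(\varsigma)\,d\varsigma$ from the Volterra double integral, and $\int_0^b k_2(\tau,\tau,\varsigma)u(\varsigma)\,d\varsigma$ from the Fredholm double integral. Using $u\le z$ together with the monotonicity of $z$ to replace each occurrence of $u$ by the larger quantity $z(\tau)$, these combine into a linear differential inequality of the form
\begin{align*}
z'(\tau)\le\Big(b(\tau,\tau)+\int_0^\tau k_1(\tau,\tau,\varsigma)\,d\varsigma+\int_0^b k_2(\tau,\tau,\varsigma)\,d\varsigma\Big)z(\tau).
\end{align*}
Integrating this inequality across each subinterval, and pasting the pieces together through the jump relation $z(\tau_k^{+})\le(1+\beta_k(\tau))\,z(\tau_k)$ coming from the impulsive sum, reproduces exactly the product $\prod_{0<\tau_k<\tau}(1+\beta_k(\tau))$ and the exponential of the three integrated kernels appearing in \eqref{v2}. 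An essentially equivalent route is to bundle the two double integrals into a single kernel and invoke Theorem~\ref{LF1} directly, treating the Fredholm contribution as an additional summand inside the exponent.

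The step I expect to be the main obstacle is the treatment of the Fredholm double integral, because it is forward-looking: its inner integral samples $u(\varsigma)$ over the whole interval $[0,b]$, hence at points $\varsigma>\tau$ that lie beyond the current comparison point. For the Volterra terms the replacement $u(\varsigma)\le z(\varsigma)\le z(\tau)$ is immediate since there $\varsigma\le\tau$, but for the Fredholm term the monotonicity bound only gives $u(\varsigma)\le z(b)$, so closing the differential inequality with $z(\tau)$ rather than the endpoint value $z(b)$ is the delicate point. This forces the endpoint to be taken at $\tau=b$ and requires the full monotonicity of $z$ on all of $[0,b]$, after which setting the frozen variable equal to the running variable recovers the kernels $b(\tau,\sigma)$, $k_1(\tau,\sigma,\varsigma)$ and $k_2(\tau,\sigma,\varsigma)$ evaluated at $\tau$ in the final bound \eqref{v2}.
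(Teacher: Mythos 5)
Your overall strategy---set $z$ equal to the right-hand side of \eqref{v1}, use that $z$ is nonnegative and nondecreasing, and reduce to an impulsive Gronwall statement---is the same as the paper's: there the comparison function is called $\mathscr V$, the two double integrals are collapsed into the single kernel $b(\tau,\sigma)+\int_0^\sigma k_1(\tau,\sigma,\varsigma)\,d\varsigma+\int_0^b k_2(\tau,\sigma,\varsigma)\,d\varsigma$, and Lemma \ref{L1} is invoked instead of integrating a piecewise differential inequality; these routes are interchangeable. The trouble is the step you yourself flag as ``the delicate point'': it is a genuine gap, and the fix you sketch does not close it. In the Fredholm term the only bound monotonicity gives is $u(\varsigma)\le z(b)$, so the inequality you integrate carries the unknown constant $z(b)$ in its inhomogeneous part, and Gronwall then yields
\begin{align*}
z(\tau)&\le \Bigl(a(\tau)+z(b)\int_0^\tau\!\!\int_0^b k_2(\tau,\sigma,\varsigma)\,d\varsigma\,d\sigma\Bigr)\prod_{0<\tau_k<\tau}\bigl(1+\beta_k(\tau)\bigr)\\
&\qquad\times\exp\Bigl(\int_0^\tau b(\tau,\sigma)\,d\sigma+\int_0^\tau\!\!\int_0^\sigma k_1(\tau,\sigma,\varsigma)\,d\varsigma\,d\sigma\Bigr).
\end{align*}
Evaluating at $\tau=b$ lets you solve for $z(b)$, and hence deduce anything like \eqref{v2}, only under a smallness condition of the type $\bigl(\int_0^b\!\int_0^b k_2\,d\varsigma\,d\sigma\bigr)\prod_{k=1}^{n}\bigl(1+\beta_k(b)\bigr)e^{\cdots}<1$, which Theorem \ref{Th3} does not assume; ``taking the endpoint at $\tau=b$'' does not remove this obstruction.

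You should also know that the paper's own proof founders at exactly the same point: in estimating $\mathscr V$ it replaces $\mathscr V(\varsigma)$ by $\mathscr V(\sigma)$ inside $\int_0^\tau\bigl(\int_0^b k_2(\tau,\sigma,\varsigma)\mathscr V(\varsigma)\,d\varsigma\bigr)d\sigma$, which is legitimate only for $\varsigma\le\sigma$; since $\mathscr V$ is nondecreasing and $\varsigma$ runs up to $b$, the inequality goes the wrong way on $(\sigma,b]$. Indeed the statement is false as written: on $[0,b]$ take $u(\tau)=\tau$, $a\equiv 0$, the kernel $b(\tau,\sigma)\equiv 0$, $k_1\equiv 0$, $k_2\equiv 2/b^2$, and no impulses; then $\int_0^\tau\!\int_0^b (2/b^2)\varsigma\,d\varsigma\,d\sigma=\tau$, so \eqref{v1} holds with equality, while \eqref{v2} would force $u\le 0$. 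So the gap you identified cannot be filled by any argument: a correct mixed Volterra--Fredholm inequality of this kind must carry a smallness hypothesis on $k_2$, and the resulting bound is precisely what your endpoint computation produces. Your instinct about where the difficulty sits was exactly right; the resolution you sketched, and the paper's, are not.
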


\begin{proof}

Denote the right hand side of following inequality \eqref{v1} by $\mathscr V(\tau)$
\begin{small}
\begin{align*}
\mathscr V(\tau)&=a(\tau)+\int_0^\tau b(\tau,\sigma)u(\sigma)d\sigma+\int_0^\tau\left( \int_0^\sigma k_{1}(\tau, \sigma, \varsigma) u(\varsigma) d\varsigma \right)d\sigma+ \int_0^\tau\left( \int_0^b k_{2}(\tau, \sigma, \varsigma) u(\varsigma) d\varsigma \right)d\sigma\nonumber\\
&\qquad+\underset{0<\tau_{k}<\tau}{\sum} ~\beta_{k}(\tau)u(\tau_{k}).
\end{align*}
\end{small}

Then the function $\mathscr V(\tau)\in PC\left([0,b],\mathbb{R}_{+}\right)$ is nondecreasing, $u(\tau)\leq \mathscr V(\tau)$ and 
\begin{small}
\begin{align}\label{v3}
\mathscr V(\tau)&\leq a(\tau)+\int_0^\tau b(\tau,\sigma)\mathscr V(\sigma)d\sigma+\int_0^\tau\left( \int_0^\sigma k_{1}(\tau, \sigma, \varsigma) \mathscr V(\varsigma) d\varsigma \right)d\sigma+ \int_0^\tau\left( \int_0^b k_{2}(\tau, \sigma, \varsigma) \mathscr V(\varsigma) d\varsigma \right)d\sigma\nonumber\\
&\qquad+\underset{0<\tau_{k}<\tau}{\sum} ~\beta_{k}(\tau)\mathscr V(\tau_{k})\nonumber\\
&\leq a(\tau)+\int_0^\tau b(\tau,\sigma)\mathscr V(\sigma)d\sigma+\int_0^\tau\left( \int_0^\sigma k_{1}(\tau, \sigma, \varsigma) \mathscr V(\sigma) d\varsigma \right)d\sigma+ \int_0^\tau\left( \int_0^b k_{2}(\tau, \sigma, \varsigma) \mathscr V(\sigma) d\varsigma \right)d\sigma\nonumber\\
&\qquad+\underset{0<\tau_{k}<\tau}{\sum} ~\beta_{k}(\tau) \mathscr V(\tau_{k})\nonumber\\
&= a(\tau)+\int_0^\tau\left[ b(\tau,\sigma)+ \int_0^s k_{1}(\tau, \sigma, \varsigma)  d\varsigma+  \int_0^b k_{2}(\tau, \sigma, \varsigma)  d\varsigma \right]\mathscr V(\sigma) d\sigma +\underset{0<t_{k}<t}{\sum} ~\beta_{k}(\tau)\mathscr V(\tau_{k}).
\end{align}
\end{small}

Applying Lemma \ref{L1} to the inequality \eqref{v3}, with
\begin{small}
\begin{align*}
& u(\tau)= \mathscr V(\tau),\\
& \tilde{a}(\tau)=a(\tau),\\
& g(\tau,\sigma)=b(\tau,\sigma)+ \int_0^\sigma k_{1}(\tau, \sigma, \varsigma)  d\varsigma+  \int_0^b k_{2}(\tau, \sigma, \varsigma)  d\varsigma,  \\
&\tilde{\beta}_{k} (\tau)= \beta_{k}(\tau),
\end{align*}
\end{small}
we obtain
\begin{small}
\begin{align}\label{F9}
\mathscr V(\tau)\leq a(\tau)\underset{0<\tau_{k}<\tau}{\prod}\left( 1+\beta_{k}(\tau)\right) \exp \left( \int_0^\tau b(\tau,\sigma)d\sigma+\int_0^\tau\int_0^\sigma k_{1}(\tau, \sigma, \varsigma)  d\varsigma\,d\sigma+\int_0^\tau\int_0^b k_{2}(\tau, \sigma, \varsigma)  d\varsigma \,d\sigma\right).
\end{align}
\end{small}
From the inequality \eqref{F9}, we obtain the desired inequality \eqref{v3}.
\end{proof}


\section{Applications of mixed version of integral inequality}
In this section, we give the application of the mixed version of integral inequality to examine the continuous dependence of solutions on initial data and functions involved in equation. Further, we analyse the dependency by means of concept of $\epsilon$-approximate solutions and utilizing mixed version of integral inequality.   
\begin{theorem}\label{th5}
Suppose that the  hypothesis {\rm (H1)-(H3)} and {\rm (A4)} are satisfied. Let $w$ and $v$ are the mild solutions of \eqref{F1}-\eqref{F2} and \eqref{F5}-\eqref{F6} respectively. Then
\begin{small}
\begin{align}\label{0004}
&\left\| w-v\right\|_{\mathcal{PB}}
\leq \left(\mathscr M\left\|w_{0}-\widehat{w}_{0}\right\|+ b\,\mathscr M \mu +n\,\mathscr M\,\eta\right)\times \nonumber\\
&\qquad\qquad\prod_{k=1}^{n} (1+\mathscr M\, L_{I_{k}}) \exp\left(\mathscr M\,L_{G}\,b 
+\mathscr M\,L_{G}\,L_{F_{1}} \frac{b^{2}}{2}+\mathscr M\,L_{G}\,L_{F_{2}}\,b^{2} \right). 
\end{align}
\end{small}
\end{theorem}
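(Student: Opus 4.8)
The plan is to compare the two mild solutions $w$ and $v$ directly through their integral representations \eqref{F0} and \eqref{F00}, estimate the difference $\|w(\tau)-v(\tau)\|$ pointwise, and then invoke the newly established mixed integral inequality in Theorem~\ref{Th3} to close the argument. First I would write
\begin{small}
\begin{align*}
w(\tau)-v(\tau)&=\mathscr T(\tau)(w_{0}-\widehat{w}_{0})\\
&\quad+\int_0^\tau \mathscr T(\tau-\sigma)\Big[G\big(\sigma,w(\sigma),\textstyle\int_0^\sigma F_1,\int_0^b F_2\big)-\widehat{G}\big(\sigma,v(\sigma),\textstyle\int_0^\sigma \widehat{F}_1,\int_0^b \widehat{F}_2\big)\Big]d\sigma\\
&\quad+\sum_{0<\tau_k<\tau}\mathscr T(\tau-\tau_k)\big[I_k(w(\tau_k))-\hat I_k(v(\tau_k))\big],
\end{align*}
\end{small}
then take norms and apply the bound $\|\mathscr T(\tau)\|_{B(X)}\leq\mathscr M$ from \eqref{F4} to every semigroup factor.

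The key technical move is to split each nonlinear difference into a part measuring the discrepancy between the two systems (controlled by (A4)) and a part measuring the discrepancy between the two solution values (controlled by the Lipschitz hypotheses (H1)--(H3)). Concretely, for the $G$-term I would insert and subtract $\widehat{G}(\sigma,w(\sigma),\int_0^\sigma \widehat{F}_1(\sigma,\varsigma,w(\varsigma))d\varsigma,\int_0^b \widehat{F}_2(\sigma,\varsigma,w(\varsigma))d\varsigma)$, so that (A4) furnishes the constant bound $\mu$ and (H1) produces a term $L_G(\|w(\sigma)-v(\sigma)\|+\int_0^\sigma\|\widehat{F}_1\text{-diff}\|d\varsigma+\int_0^b\|\widehat{F}_2\text{-diff}\|d\varsigma)$; applying (H2) to the inner kernels then yields the iterated integrals $L_GL_{F_1}\int_0^\sigma\|w(\varsigma)-v(\varsigma)\|d\varsigma$ and $L_GL_{F_2}\int_0^b\|w(\varsigma)-v(\varsigma)\|d\varsigma$. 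Similarly the impulse term splits via $I_k(w)-\hat I_k(w)$ and $\hat I_k(w)-\hat I_k(v)$ into an $\eta$-contribution and an $L_{I_k}\|w(\tau_k)-v(\tau_k)\|$-contribution. Collecting the constant pieces gives exactly $\mathscr M\|w_0-\widehat{w}_0\|+b\,\mathscr M\mu+n\,\mathscr M\eta$, and the remaining terms assemble into the form of the hypothesis of Theorem~\ref{Th3} with $u(\tau)=\|w(\tau)-v(\tau)\|$.

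With the estimate cast as
\begin{small}
\begin{align*}
\|w(\tau)-v(\tau)\|&\leq a(\tau)+\int_0^\tau \mathscr M L_G\,\|w(\sigma)-v(\sigma)\|d\sigma+\int_0^\tau\int_0^\sigma \mathscr M L_G L_{F_1}\|w(\varsigma)-v(\varsigma)\|d\varsigma d\sigma\\
&\quad+\int_0^\tau\int_0^b \mathscr M L_G L_{F_2}\|w(\varsigma)-v(\varsigma)\|d\varsigma d\sigma+\sum_{0<\tau_k<\tau}\mathscr M L_{I_k}\|w(\tau_k)-v(\tau_k)\|,
\end{align*}
\end{small}
where $a(\tau)=\mathscr M\|w_0-\widehat{w}_0\|+b\,\mathscr M\mu+n\,\mathscr M\eta$ is a (constant, hence nondecreasing) bound, I would read off the data $b(\tau,\sigma)=\mathscr M L_G$, $k_1=\mathscr M L_G L_{F_1}$, $k_2=\mathscr M L_G L_{F_2}$, $\beta_k=\mathscr M L_{I_k}$ and apply Theorem~\ref{Th3} verbatim. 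Evaluating the resulting exponent gives $\int_0^\tau \mathscr M L_G\,d\sigma=\mathscr M L_G b$, the first iterated integral $\mathscr M L_G L_{F_1}\,b^2/2$, and the second $\mathscr M L_G L_{F_2}\,b^2$, reproducing the product-times-exponential factor in \eqref{0004}; passing to the piece-wise Bielecki supremum then yields the stated bound. The main obstacle is bookkeeping rather than conceptual: I must perform the add-and-subtract splitting consistently so the cross terms combine cleanly, and I must verify that the constant $a(\tau)$ and the kernels genuinely satisfy the monotonicity and nonnegativity requirements of Theorem~\ref{Th3} (here they are constants, so this is immediate) before the inequality can be applied.
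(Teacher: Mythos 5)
Your overall strategy---subtract the two mild-solution representations, bound the semigroup by $\mathscr M$ using \eqref{F4}, split each nonlinear difference into a ``system discrepancy'' piece and a ``solution discrepancy'' piece, and feed the resulting estimate into Theorem~\ref{Th3}---is exactly the paper's route, and your final application of the integral inequality (the choices of $a$, $b$, $k_1$, $k_2$, $\beta_k$, the exponent $\mathscr M L_G b+\mathscr M L_G L_{F_1}b^2/2+\mathscr M L_G L_{F_2}b^2$, and the passage to the norm) matches the paper's. However, your add-and-subtract goes in the wrong direction, and under the stated hypotheses that step fails. You insert $\widehat{G}\bigl(\sigma,w(\sigma),\int_0^\sigma\widehat{F}_1(\sigma,\varsigma,w(\varsigma))d\varsigma,\int_0^b\widehat{F}_2(\sigma,\varsigma,w(\varsigma))d\varsigma\bigr)$, so the remaining piece to be Lipschitz-estimated is $\widehat{G}(\sigma,w,\ldots)-\widehat{G}(\sigma,v,\ldots)$, and likewise your impulse split leaves $\hat{I}_k(w(\tau_k))-\hat{I}_k(v(\tau_k))$. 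But Theorem~\ref{th5} assumes only {\rm (H1)--(H3)} and {\rm (A4)}: the hatted functions $\widehat{G},\widehat{F}_j,\hat{I}_k$ are merely continuous, and {\rm (H1)--(H3)} speak about $G$, $F_j$, $I_k$ only. So the claim that ``(H1) produces a term $L_G(\cdots)$'' for a difference of $\widehat{G}$-values is unjustified; even if you additionally assumed {\rm (A2)--(A3)}, the constants produced would be $L_{\widehat{G}}, L_{\widehat{F}_j}, L_{\hat{I}_k}$, not the $L_G, L_{F_1}, L_{F_2}, L_{I_k}$ that appear in the bound \eqref{0004} you are trying to prove.

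The fix is to split the other way, as the paper does: insert and subtract $G\bigl(\sigma,v(\sigma),\int_0^\sigma F_1(\sigma,\varsigma,v(\varsigma))d\varsigma,\int_0^b F_2(\sigma,\varsigma,v(\varsigma))d\varsigma\bigr)$ and $I_k(v(\tau_k))$. Then the Lipschitz pieces $G(\sigma,w,\ldots)-G(\sigma,v,\ldots)$ and $I_k(w(\tau_k))-I_k(v(\tau_k))$ are controlled by {\rm (H1)--(H3)} with the correct constants, while the discrepancy pieces $G(\sigma,v,\ldots)-\widehat{G}(\sigma,v,\ldots)$ and $I_k(v(\tau_k))-\hat{I}_k(v(\tau_k))$ are bounded by $\mu$ and $\eta$ via {\rm (A4)}; note that {\rm (A4)} permits different second and third arguments (only the first argument must agree), so it applies to this split just as well as to yours. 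With that single change your estimate becomes exactly \eqref{F11}, and the rest of your argument---the constant majorant $a(\tau)$, the constant kernels, the application of Theorem~\ref{Th3}, and taking the supremum---goes through as you wrote it.
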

\begin{proof}

Let $w$ and $v$ be the mild solution  of \eqref{F1}-\eqref{0F3} and  \eqref{F5}-\eqref{0F7} respectively. Then utilizing hypothesis (H1), (H2), (H3) and (A4), we get
\begin{small}
\begin{align} \label{F11}
&\left\| w(\tau)-v(\tau)\right\|\nonumber\\
&\leq \left\| \mathscr T(\tau)\right\|_{B(X)} \left\| w_{0}-\widehat{w}_{0}\right\| +\int_0^\tau \left\| \mathscr T(\tau-\sigma)\right\|_{B(X)}  \left\|  G\left(\sigma, w(\sigma), \int_0^\sigma F_{1}(\sigma,\varsigma,w(\varsigma))d\varsigma,\right.\right.\nonumber\\
&\qquad \left.\left.\int_0^b  F_{2}(\sigma,\varsigma,w(\varsigma))d\varsigma \right)-\widehat{G}\left(\sigma, v(\sigma), \int_0^\sigma \widehat{F}_1(\sigma,\varsigma,v(\varsigma))d\varsigma,\int_0^b  \widehat{F}_2(\sigma,\varsigma,v(\varsigma))d\varsigma \right)\right\| d\sigma \nonumber\\
& \qquad +\underset{0<\tau_{k}<\tau}{\sum} \left\| \mathscr T(\tau-\tau_{k})\right\|_{B(X)}  \left\| I_{k}\left(w(\tau_{k})\right)-\hat{I}_{k}(v(\tau_{k}))\right\|\nonumber\\
& \leq \left\| \mathscr T(\tau)\right\|_{B(X)} \left\| w_{0}-\widehat{w}_{0}\right\| +\int_0^\tau \left\| \mathscr T(\tau-\sigma)\right\|_{B(X)}  \left\|  G\left(\sigma, w(\sigma), \int_0^\sigma F_{1}(\sigma,\varsigma,w(\varsigma))d\varsigma,\right.\right.\nonumber\\
&\qquad \left.\left.\int_0^b  F_{2}(\sigma,\varsigma,w(\varsigma))d\varsigma \right)-G\left(\sigma, v(\sigma), \int_0^\sigma F_1(\sigma,\varsigma,v(\varsigma))d\varsigma,\int_0^b  F_2(\sigma,\varsigma,v(\varsigma))d\varsigma \right)\right\| d\sigma \nonumber\\
&\qquad+\int_0^\tau \left\| \mathscr T(\tau-\sigma)\right\|_{B(X)}  \left\|  G\left(\sigma, v(\sigma), \int_0^\sigma F_{1}(\sigma,\varsigma,v(\varsigma))d\varsigma,\right.\right.\nonumber\\
&\qquad \left.\left.\int_0^b  F_{2}(\sigma,\varsigma,v(\varsigma))d\varsigma \right)-\widehat{G}\left(\sigma, v(\sigma), \int_0^\sigma \widehat{F}_1(\sigma,\varsigma,v(\varsigma))d\varsigma,\int_0^b  \widehat{F}_2(\sigma,\varsigma,v(\varsigma))d\varsigma \right)\right\| d\sigma \nonumber\\
& \qquad \quad +\underset{0<\tau_{k}<\tau}{\sum} \left\| \mathscr T(\tau-\tau_{k})\right\|_{B(X)}  \left\| I_{k}\left(w(\tau_{k})\right)-I_{k}(v(\tau_{k}))\right\|\nonumber \\
& \qquad \qquad +\underset{0<\tau_{k}<\tau}{\sum} \left\| \mathscr T(\tau-\tau_{k})\right\|_{B(X)}  \left\| I_{k}\left(v(\tau_{k})\right)-\hat{I}_{k}(v(\tau_{k}))\right\|\nonumber\\
& \leq\mathscr M\left\|w_{0}-\widehat{w}_{0} \right\|+ \tau\mathscr M \mu +n\,\mathscr M\,\eta + \int_0^\tau \mathscr M L_{G}\left\| w(\sigma)-v(\sigma)\right\| d\sigma +\int_0^\tau \int_0^\sigma \mathscr ML_{G}L_{F_{1}}\nonumber\\
& \qquad   \left\| w(\varsigma)-v(\varsigma)\right\| d\varsigma d\sigma + \int_0^\tau \int_0^b \mathscr ML_{G}L_{F_{2}}\left\| w(\varsigma)-v(\varsigma)\right\| d\varsigma d\sigma+ \underset{0<\tau_{k}<\tau}{\sum}  \mathscr M\, L_{I_{k}}\left\| w(\tau_{k})-v(\tau_{k})\right\|.
\end{align}
\end{small}

Applying impulsive inequality from the  Theorem \ref{Th3} to \eqref{F11}
with
\begin{align*} 
& u(\tau)= \left\| w(\tau)-v(\tau)\right\|,\\
&a(\tau)=\mathscr M\left\|w_{0}-\widehat{w}_{0} \right\|+ \tau\mathscr M \mu +n\,\mathscr M\,\eta,\\
& b(\tau,\sigma)=\mathscr M\, L_{G},\,\\ 
& k_{1}(\tau,\sigma,\varsigma)=\mathscr M\, L_{G}\,L_{F_{1}},\\
& k_{2}(\tau, \sigma, \varsigma)=\mathscr M\, L_{G}\,L_{F_{2}},\\
&\beta_{k} (\tau)= \mathscr M L_{I_{k}},
\end{align*}
we obtain
\begin{small}
\begin{align*} 
&\left\| w(\tau)-v(\tau)\right\|\nonumber\\
& \leq \left(\mathscr M\left\|w_{0}-\widehat{w}_{0} \right\|+ \tau\mathscr M \mu +n\,\mathscr M\,\eta\right) \prod_{0<\tau_{k}<0}^{} (1+\mathscr M\, L_{I_{k}}) \exp\left(\mathscr M\,L_{G}\,b 
+\mathscr M\,L_{G}\,L_{F_{1}} \frac{b^{2}}{2}+\mathscr M\,L_{G}\,L_{F_{2}}\,b^{2} \right)\\
& \leq \left(\mathscr M\left\|w_{0}-\widehat{w}_{0}\right\|+ b\,\mathscr M\, \mu +n\,\mathscr M\,\eta\right) \prod_{k=1}^{n} 1+\mathscr M\, L_{I_{k}}) \exp\left(\mathscr M\,L_{G}\,b 
+\mathscr M\,L_{G}\,L_{F_{1}} \frac{b^{2}}{2}+\mathscr M\,L_{G}\,L_{F_{2}}\,b^{2} \right).
\end{align*}
\end{small}

Thus we get
\begin{small}
\begin{align*} 
\left\| w-v\right\|_{\mathcal{PC}}&=\underset{\tau\in J}{\sup}\left\lbrace \frac{\left\|w(\tau)-v(\tau) \right\|}{e^{\gamma \tau}} \right\rbrace \nonumber\\
& \leq \left(\mathscr M\left\|w_{0}-\widehat{w}_{0} \right\|+ b\,\mathscr M \mu +n\,\mathscr M\,\eta\right)\times\\ 
&\quad \quad\prod_{k=1}^{n} (1+\mathscr M\, L_{I_{k}}) \exp\left(\mathscr M\,L_{G}\,b 
+\mathscr M\,L_{G}\,L_{F_{1}} \frac{b^{2}}{2}+\mathscr M\,L_{G}\,L_{F_{2}}\,b^{2} \right),
\end{align*}
\end{small}
which is desired inequality \eqref{0004}.
\end{proof}

\begin{definition} For a given constant $\epsilon \geq 0$, a function $ w\in \Theta_{\mathcal{PB}}$ satisfying the inequality
\begin{small}
\begin{align*}
\left\| w'(\tau)-\mathscr Aw(\tau)-G\left(\tau, w(\tau), \int_0^\tau  F_{1}(\tau,\sigma,w(\sigma))d\sigma,\int_0^b  F_{2}(\tau,\sigma,w(\sigma))d\sigma\right)\right\|\leq \epsilon, ~ \tau\in J.
\end{align*}
\end{small}
subject to  $w(0)=w_{0}$ and $ \Delta w(\tau_{k})= I_{k}(w(\tau_{k})),~k=1,2,\cdots,n$, is called a $\epsilon$-approximate solution of the {\rm VFIIDE} \eqref{F1}.
\end{definition}
\begin{theorem}

Assume that (H1)-(H3) holds. If $w_{j}(\tau),\,(j=1,2)$ be $\epsilon_{j}$-approximate solutions of VFIIDE \eqref{F1} corresponding to  $w^{j}(0)=w^{j}_{0} \in X,\, \Delta w_{j}(\tau_{k})= I_{k}(w_{j}(\tau_{k})) \in X,~k=1,2,\cdots,n$ respectively. Then
\begin{small}
\begin{align}\label{F12}
\left\| w_{1}-w_{2}\right\|_{\mathcal{PB}}
&\leq\left\lbrace \left( \epsilon_{1}+\epsilon_{2}\right)\,\mathscr M (b+n)+ \mathscr M \left\| w^{1}_{0}-w^{2}_{0}\right\|\right\rbrace \times\nonumber\\
& \qquad\qquad \prod_{k=1}^{n} (1+\mathscr M\, L_{I_{k}}) \exp\left(\mathscr M\,L_{G}\,b 
+\mathscr M\,L_{G}\,L_{F_{1}} \frac{b^{2}}{2}+\mathscr M\,L_{G}\,L_{F_{2}}\,b^{2} \right).
\end{align}
\end{small}
\end{theorem}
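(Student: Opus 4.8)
The plan is to follow the template of Theorem~\ref{th5}: recast each $\epsilon_{j}$-approximate solution as a mild solution carrying a uniformly bounded residual, subtract the two integral representations, and then feed the result into the extended inequality of Theorem~\ref{Th3}.

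First I would exploit the defining inequality. For $j=1,2$ set
\begin{small}
\begin{align*}
p_{j}(\tau):=w_{j}'(\tau)-\mathscr Aw_{j}(\tau)-G\!\left(\tau,w_{j}(\tau),\int_{0}^{\tau}F_{1}(\tau,\sigma,w_{j}(\sigma))d\sigma,\int_{0}^{b}F_{2}(\tau,\sigma,w_{j}(\sigma))d\sigma\right),
\end{align*}
\end{small}
so that $\|p_{j}(\tau)\|\le\epsilon_{j}$ for all $\tau\in J$. Each $w_{j}$ then solves $w_{j}'=\mathscr Aw_{j}+G(\cdots)+p_{j}$ together with $w_{j}(0)=w^{j}_{0}$ and $\Delta w_{j}(\tau_{k})=I_{k}(w_{j}(\tau_{k}))$, and the variation-of-parameters formula for the $C_{0}$-semigroup $\{\mathscr T(\tau)\}_{\tau\ge 0}$ yields the mild representation
\begin{small}
\begin{align*}
w_{j}(\tau)=\mathscr T(\tau)w^{j}_{0}+\int_{0}^{\tau}\mathscr T(\tau-\sigma)\big[G(\cdots)+p_{j}(\sigma)\big]d\sigma+\underset{0<\tau_{k}<\tau}{\sum}\mathscr T(\tau-\tau_{k})I_{k}(w_{j}(\tau_{k})).
\end{align*}
\end{small}
This is the step I expect to be the crux: one must justify that an $\epsilon$-approximate solution admits exactly the mild form \eqref{F0}, perturbed only by the term $\int_{0}^{\tau}\mathscr T(\tau-\sigma)p_{j}(\sigma)d\sigma$, and must decide how the residual at the impulse instants is accounted for. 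Using \eqref{F4}, the continuous residual contributes a term $\mathscr M\,\epsilon_{j}\,\tau\le \mathscr M\,\epsilon_{j}\,b$, while collecting in addition the $n$ impulse defects (each bounded by $\mathscr M\,\epsilon_{j}$) is what produces the coefficient $\mathscr M(b+n)$ of $(\epsilon_{1}+\epsilon_{2})$ in \eqref{F12}; this mirrors precisely the roles of $\mu$ and $\eta$ in Theorem~\ref{th5}.

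Next I would subtract the two representations, take norms, and apply \eqref{F4} together with hypotheses (H1)--(H3) to the differences of $G$, of the $F_{j}$, and of $I_{k}(w_{1}(\tau_{k}))-I_{k}(w_{2}(\tau_{k}))$, while bounding $\|p_{1}(\sigma)-p_{2}(\sigma)\|\le\epsilon_{1}+\epsilon_{2}$. Writing $u(\tau):=\|w_{1}(\tau)-w_{2}(\tau)\|$, this gives an inequality of precisely the form \eqref{v1} with
\begin{small}
\begin{align*}
a(\tau)=\mathscr M\|w^{1}_{0}-w^{2}_{0}\|+(\epsilon_{1}+\epsilon_{2})\mathscr M\tau+(\epsilon_{1}+\epsilon_{2})\mathscr M n,\quad b(\tau,\sigma)=\mathscr M L_{G},\quad k_{1}=\mathscr M L_{G}L_{F_{1}},\quad k_{2}=\mathscr M L_{G}L_{F_{2}},\quad \beta_{k}(\tau)=\mathscr M L_{I_{k}},
\end{align*}
\end{small}
where $a$ is nondecreasing and the kernels are nonnegative constants, so every hypothesis of Theorem~\ref{Th3} is met. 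Applying that theorem, evaluating the elementary integrals $\int_{0}^{\tau}\mathscr M L_{G}d\sigma\le \mathscr M L_{G}b$, $\int_{0}^{\tau}\!\int_{0}^{\sigma}\mathscr M L_{G}L_{F_{1}}d\varsigma d\sigma\le \mathscr M L_{G}L_{F_{1}}b^{2}/2$ and $\int_{0}^{\tau}\!\int_{0}^{b}\mathscr M L_{G}L_{F_{2}}d\varsigma d\sigma\le \mathscr M L_{G}L_{F_{2}}b^{2}$, using $\prod_{0<\tau_{k}<\tau}(1+\mathscr M L_{I_{k}})\le\prod_{k=1}^{n}(1+\mathscr M L_{I_{k}})$, and bounding $a(\tau)\le \mathscr M\|w^{1}_{0}-w^{2}_{0}\|+(\epsilon_{1}+\epsilon_{2})\mathscr M(b+n)$ for $\tau\le b$, bounds $u(\tau)$ by the right-hand side of \eqref{F12}. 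Finally, since $e^{-\gamma\tau}\le 1$ the pointwise bound survives the supremum defining $\|\cdot\|_{\mathcal{PB}}$, yielding \eqref{F12}. Apart from the modelling choice in the first paragraph, every remaining estimate is routine and runs parallel to the proof of Theorem~\ref{th5}.
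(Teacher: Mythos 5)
Your proposal follows essentially the same route as the paper's proof: recast each $\epsilon_{j}$-approximate solution as a mild solution carrying a residual bounded by $\epsilon_{j}$, bound the deviation of $w_{j}$ from the unperturbed mild form by $\epsilon_{j}\mathscr M(\tau+n)$, subtract the two representations using the triangle inequality, and apply Theorem \ref{Th3} with exactly the same choices of $a$, $b$, $k_{1}$, $k_{2}$, $\beta_{k}$ before taking the supremum. The ``crux'' you flag is resolved in the paper by simply positing defects $(P_{w_{j}})_{k}$ at the impulse instants as well, each bounded by $\epsilon_{j}$, which is precisely what yields the coefficient $\mathscr M(b+n)$; your variant with exact impulse conditions would give the sharper coefficient $\mathscr M b$, which still implies \eqref{F12}.
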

\begin{proof}

Let $w_{j}(\tau),\,(j=1,2)$ be $\epsilon_{j}$-approximate solutions of VFIIDE \eqref{F1} corresponding to  $w^{j}(0)=w^{j}_{0} \in X,\, \Delta w_{j}(\tau_{k})= I_{k}(w_{j}(\tau_{k})) \in X,~k=1,2,\cdots,n$ respectively. Then we have
\begin{small}
\begin{align}\label{4}
\left\| w_{j}'(\tau)-\mathscr Aw_{j}(\tau)-G\left(\tau, w_{j}(\tau), \int_0^\tau F_1(\tau,\sigma,w_{j}(\sigma))d\sigma,\int_0^b  F_2(\tau,\sigma,w_{j}(\sigma))d\sigma\right)\right\|\leq \epsilon_{j}, ~ \tau\in J.
\end{align}
\end{small}

Then there exist $P_{w_{j}}\in PC(I,X)$ and a sequence $(P_{w_{j}})_{k}$ (dependence on $w_{j}$ ) such that
\begin{enumerate}
\item[(i)] $\left\| P_{w_{j}}(\tau)\right\|\leq \epsilon_{j},\,\tau \in J \qquad \text{and}\qquad \left\| (P_{w_{j}})_{k}\right\|\leq \epsilon_{j},\,\,k=1,2,\cdots,n.$
\item[(ii)] $ w_{j}'(\tau)=\mathscr Aw_{j}(\tau)+G\left(\tau, w_{j}(\tau), \int_0^\tau F_1(\tau,\sigma,w_{j}(\sigma))d\sigma,\int_0^b  F_2(\tau,\sigma,w_{j}(\sigma))d\sigma\right)+ P_{w_{j}}(\tau),\,\tau\in J.$
\item[(iii)] $\Delta w_{j}(\tau_{k})= I_{k}(w_{j}(\tau_{k}))+( P_{w_{j}})_{k},\,k=1,2,\cdots,n.$
\end{enumerate}

This gives
\begin{small}
\begin{align}\label{F13}
& w_{j}(\tau)=\mathscr T(\tau)w^{j}_{0}+\int_0^\tau \mathscr T(\tau-\sigma)\left[  G\left(\sigma, w_{j}(\sigma), \int_0^\sigma  F_{1}(\sigma,\varsigma,w_{j}(\varsigma))d\varsigma,\int_0^b  F_{2}(\sigma,\varsigma,w_{j}(\varsigma))d\varsigma \right)+ P_{w_{j}}(\sigma)\right] d\sigma\nonumber\\
&  \qquad +\sum_{k=1}^{n} \mathscr T(\tau-\tau_{k}) \left[ I_{k}\left(w_{j}(\tau_{k})\right)+(P_{w_{j}})_{k}\right]\nonumber\\
& \left\| w_{j}(\tau)-\mathscr T(\tau)w^{j}_{0}-\int_0^\tau \mathscr T(\tau-\sigma) G\left(\sigma, w_{j}(\sigma), \int_0^\sigma  F_{1}(\sigma,\varsigma,w_{j}(\varsigma))d\varsigma,\int_0^b  F_{2}(\sigma,\varsigma,w_{j}(\varsigma))d\varsigma \right) d\sigma\right.\nonumber\\
& \left.\qquad-\sum_{k=1}^{n} \mathscr T(\tau-\tau_{k}) I_{k}\left(w_{j}(\tau_{k})\right)\right\|\nonumber\\
&\leq\int_0^\tau \left\| \mathscr T(\tau-\sigma)\right\|  \left\| P_{w_{j}}(\sigma)\right\|  d\sigma
+\sum_{k=1}^{n} \left\| \mathscr T(\tau-\tau_{k})\right\| \left\| (P_{w_{j}})_{k}\right\|\nonumber\\
&\leq \tau \mathscr M \epsilon_{j}+\mathscr M\,n\,\epsilon_{j}= \epsilon_{j}\,\mathscr M (\tau+n),\,\, j=1,2,\,\, \tau\in J.
\end{align}
\end{small}
Therefore from \eqref{F13} we have
\begin{small}
\begin{align}\label{5}
&\left( \epsilon_{1}+\epsilon_{2}\right)\,\mathscr M (\tau+n)\nonumber\\
&\geq\left\| w_{1}(\tau)-\mathscr T(\tau)w^{1}_{0}-\int_0^\tau \mathscr T(\tau-\sigma) G\left(\sigma, w_{1}(\sigma), \int_0^\sigma  F_{1}(\sigma,\varsigma,w_{1}(\varsigma))d\varsigma,\int_0^b  F_{2}(\sigma,\varsigma,w_{1}(\varsigma))d\varsigma \right) d\sigma\right.\nonumber\\
& \qquad\left.-\sum_{k=1}^{n} \mathscr T(\tau-\tau_{k}) I_{k}\left(w_{1}(\tau_{k})\right)\right\|+\left\| w_{2}(\tau)-\mathscr T(\tau)w^{2}_{0}-\int_0^\tau \mathscr T(\tau-\sigma) G\left(\sigma, w_{2}(\sigma),\right.\right.\nonumber\\
&\left.\left.\qquad\quad \int_0^\sigma  F_{1}(\sigma,\varsigma,w_{2}(\varsigma))d\varsigma,\int_0^b  F_{2}(\sigma,\varsigma,w_{2}(\varsigma))d\varsigma \right) d\sigma-\sum_{k=1}^{n} \mathscr T(\tau-\tau_{k}) I_{k}\left(w_{2}(\tau_{k})\right)\right\|.
\end{align}
\end{small}

As we know  for  any $\xi_{1},\xi_{2} \in X, \left\|\xi_{1}- \xi_{2}\right\|\leq \|\xi_{1}\|+ \|\xi_{2}\|$  and $|\|\xi_{1}\|- \|\xi_{2}\||\leq \left\|\xi_{1}- \xi_{2}\right\|$. Using this in Eq.\eqref{5}, we get
\begin{small}
\begin{align}\label{6}
&\left( \epsilon_{1}+\epsilon_{2}\right)\,\mathscr M (\tau+n)\nonumber\\
& \geq\left\| \left\lbrace w_{1}(\tau)-\mathscr T(\tau)w^{1}_{0}-\int_0^\tau \mathscr T(\tau-\sigma) G\left(\sigma, w_{1}(\sigma), \int_0^\sigma F_{1}(\sigma,\varsigma,w_{1}(\varsigma))d\varsigma,\int_0^b  F_{2}(\sigma,\varsigma,w_{1}(\varsigma))d\varsigma \right)d\sigma\right.\right.\nonumber\\
&\left.\left. \quad -\sum_{k=1}^{n} \mathscr T(\tau-\tau_{k}) I_{k}\left(w_{1}(\tau_{k})\right)\right\rbrace-\left\lbrace  w_{2}(\tau)-\mathscr T(\tau)w^{2}_{0}-\int_0^\tau \mathscr T(\tau-\sigma) G\left(\sigma, w_{2}(\sigma)\right. \right.\right.\nonumber\\
&\left.\left.\left. \quad \int_0^\sigma  F_{1}(\sigma,\varsigma,w_{2}(\varsigma))d\varsigma,\int_0^b   F_{2}(\sigma,\varsigma,w_{2}(\varsigma))d\varsigma \right)d\sigma-\sum_{k=1}^{n} \mathscr T(\tau-\tau_{k})I_{k} \left( w_{2}(\tau_{k})\right)\right\rbrace  \right\|\nonumber\\
& =\left\|\left[w_{1}(\tau)-w_{2}(\tau)\right]-\left( \mathscr T(\tau)\left[ w^{1}_{0}-w^{2}_{0}\right]\right)- \left\lbrace  \int_0^\tau \mathscr T(\tau-\sigma)\left[G\left(\sigma, w_{1}(\sigma), \int_0^\sigma  F_{1}(\sigma,\varsigma,w_{1}(\varsigma))d\varsigma,\right.\right. \right.\right.\nonumber\\
&\qquad\left.\left.\left.\left. \int_0^b  F_{2}(\sigma,\varsigma,w_{1}(\varsigma))d\sigma \right)- G\left(\sigma, w_{2}(\sigma)\int_0^\sigma  F_{1}(\sigma,\varsigma,w_{2}(\varsigma))d\varsigma,\int_0^b  F_{2}(\sigma,\varsigma,w_{2}(\varsigma))d\varsigma \right)\right] d\sigma \right\rbrace \right.\nonumber\\
&\left. \qquad-\left\lbrace \sum_{k=1}^{n}\mathscr T(\tau-\tau_{k}) \left[I_{k} \left( w_{1}(\tau_{k})\right)-I_{k} \left( w_{2}(\tau_{k})\right) \right] \right\rbrace \right\| \nonumber\\
&\geq \left\| w_{1}(\tau)-w_{2}(\tau)\right\|-\left\| \mathscr T(\tau)\left[ w^{1}_{0}-w^{2}_{0}\right]\right\|-\left\| \int_0^\tau \mathscr T(\tau-\sigma) \left[ G\left(\sigma, w_{1}(\sigma)\int_0^\sigma  F_{1}(\sigma,\varsigma,w_{1}(\varsigma))d\varsigma \right.\right.\right.\nonumber\\
&\qquad\left.\left.\left.\int_0^b  F_{2}(\sigma,\varsigma,w_{1}(\varsigma))d\varsigma \right)-  G\left(\sigma, w_{2}(\sigma)\int_0^\sigma F_{1}(\sigma,\varsigma,w_{2}(\varsigma))d\varsigma,\int_0^b  F_{2}(\sigma,\varsigma,w_{2}(\varsigma))d\varsigma \right)\right]d\sigma \right\|\nonumber\\
&\quad \qquad -\left\|\sum_{k=1}^{n}\mathscr T(\tau-\tau_{k})\left[ I_{k} \left( w_{1}(\tau_{k})\right)-I_{k} \left( w_{2}(\tau_{k})\right) \right]  \right\|.
\end{align}
\end{small}

In Eq.\eqref{6} can be written as 
\begin{small}
\begin{align} \label{7}
&\left\| w_{1}(\tau)-w_{2}(\tau)\right\|\nonumber\\
&\leq\left( \epsilon_{1}+\epsilon_{2}\right)\,\mathscr M (\tau+n)+\left\| \mathscr T(\tau)\left[ w^{1}_{0}-w^{2}_{0}\right]\right\|+\left\| \int_0^\tau \mathscr T(\tau-\sigma) \left[ G\left(\sigma, w_{1}(\sigma)\int_0^\sigma , F_{1}(\sigma,\sigma,w_{1}(\varsigma))d\varsigma\right.\right.\right.\nonumber\\
&\qquad\left.\left.\left.\int_0^b  F_{2}(\sigma,\varsigma,w_{1}(\varsigma))d\varsigma \right)-  G\left(\sigma, w_{2}(\sigma)\int_0^\sigma  F_{1}(\sigma,\varsigma,w_{2}(\varsigma))d\varsigma,\int_0^b  F_{2}(\sigma,\varsigma,w_{2}(\varsigma))d\varsigma \right)\right] d\sigma \right\|\nonumber\\
&\quad\qquad+\left\|\sum_{k=1}^{n}\mathscr T(\tau-\tau_{k})\left[ I_{k} \left( w_{1}(\tau_{k})\right)-I_{k} \left( w_{2}(\tau_{k})\right) \right]  \right\| .
\end{align}
\end{small}

 Using hypotheses (H1)-(H3)  and  let $\bb(\tau) = \left\| w_{1}(\tau)-w_{2}(\tau)\right\| $ in \eqref{7} we get,
\begin{small}
\begin{align} \label{8}
& \bb(\tau)\leq \left( \epsilon_{1}+\epsilon_{2}\right)\,\mathscr M (\tau+n)+ \mathscr M\left\| w^{1}_{0}-w^{2}_{0}\right\|+ \int_0^\tau \mathscr M\, L_{G}\bb(\sigma) d\sigma + \int_0^\tau \int_0^\sigma \mathscr M\,L_{G}\,L_{F_{1}}\bb(\varsigma) d\varsigma d\sigma\nonumber\\
& \qquad \qquad\int_0^\tau \int_0^b \mathscr M\,L_{G}\,L_{F_{2}}\bb(\varsigma) d\varsigma d\sigma+ \underset{0<\tau_{k}<\tau}{\sum} \mathscr M\, L_{I_{k}}\bb(\tau_{k}).
\end{align}
\end{small}

Applying inequality from  the Theorem \ref{Th3} to \eqref{8} with
\begin{align*}
&  u(\tau)=\bb(\tau),\\
& a(\tau)=\left( \epsilon_{1}+\epsilon_{2}\right)\,\mathscr M (\tau+n)+ \mathscr M\left\| w^{1}_{0}-w^{2}_{0}\right\|\\
& b(\tau,\sigma)=\mathscr M\, L_{G},\\ 
& k_{1}(\tau,\sigma,\varsigma)=\mathscr M\, L_{G}\,L_{F_{1}},\\
& k_{2}(\tau,\sigma,\varsigma)=\mathscr M\, L_{G}\,L_{F_{2}},\\
&\beta_{k} (\tau)= \mathscr M L_{I_{k}},
\end{align*}
we get
\begin{small}
\begin{align*}
\bb(\tau)&\leq\left\lbrace \left( \epsilon_{1}+\epsilon_{2}\right)\,\mathscr M (\tau+n)+ \mathscr M\left\| w^{1}_{0}-w^{2}_{0}\right\|\right\rbrace\\
&\qquad \prod_{0<\tau_{k}<0}^{} (1+\mathscr M\, L_{I_{k}}) \exp\left(\mathscr M\,L_{G}\,b 
+\mathscr M\,L_{G}\,L_{F_{1}} \frac{b^{2}}{2}+\mathscr M\,L_{G}\,L_{F_{2}}\,b^{2} \right)\\
&\leq\left\lbrace \left( \epsilon_{1}+\epsilon_{2}\right)\,\mathscr M (b+n)+ \mathscr M\left\| w^{1}_{0}-w^{2}_{0}\right\|\right\rbrace\\
&\qquad  \prod_{k=1}^{n} (1+\mathscr M\, L_{I_{k}}) \exp\left(\mathscr M\,L_{G}\,b 
+\mathscr M\,L_{G}\,L_{F_{1}} \frac{b^{2}}{2}+\mathscr M\,L_{G}\,L_{F_{2}}\,b^{2} \right)
\end{align*}
\end{small}

But $ \bb(\tau)=\left\| w_{1}(\tau)-w_{2}(\tau)\right\| $ we have
\begin{small}
\begin{align*}
\left\| w_{1}-w_{2}\right\|_{\mathcal{PC}}&=\underset{\tau\in J}{\sup}\left\lbrace \frac{\left\| w_{1}(\tau)-w_{2}(\tau) \right\|}{e^{\gamma \tau}} \right\rbrace \\
&\leq\left\lbrace \left( \epsilon_{1}+\epsilon_{2}\right)\,\mathscr M (b+n)+ \mathscr M\left\| w^{1}_{0}-w^{2}_{0}\right\|\right\rbrace\times\\
&\qquad  \prod_{k=1}^{n}(1+\mathscr M\, L_{I_{k}}) \exp\left(\mathscr M\,L_{G}\,b 
+\mathscr M\,L_{G}\,L_{F_{1}} \frac{b^{2}}{2}+\mathscr M\,L_{G}\,L_{F_{2}}\,b^{2} \right).
\end{align*}
\end{small}

Which gives the  required inequality \eqref{F12}.
\end{proof}
\begin{rem}
\begin{itemize}
\item{\rm (i)} Continuous dependence of solutions of \eqref{F1} on initial conditions obtained by putting  $\epsilon_{1}=\epsilon_{2}=0$ in  inequality \eqref{F12}. 
\item {\rm(ii)} Uniqueness of the solution of problem \eqref{F1}-\eqref{F2} obtained by putting $\epsilon_{1}=\epsilon_{2}=0$ and $w^{1}_{0}=w^{2}_{0}$ in inequality \eqref{F12}.
\end{itemize}
\end{rem}

\section{Concluding Remarks}
Existence and uniqueness of solution of  the
Volterra-Fredholm impulsive integrodifferential equations (VFIIDEs) have been successfully achieve, through Banach's fixed point theorem. We favourably achieve an interesting extension  that is a mixed version of integral inequality for piece-wise continuous functions. Further, continuous data dependence of solutions on initial condition and functions involved in right hand side of VFIIDEs obtained by two techniques  first via  Picard operator theory  and secondly  via mixed version of integral inequality for piece-wise continuous functions.
 
In view of obtaining continuous data dependence via  PO, the Eq.\eqref{f0} is holds when
\begin{small}
 $$L_{\mathcal{R}}=\frac{\mathscr M\,L_{G}}{\gamma}\left[ \left( 1-e^{-\gamma\,b}\right) \left( 1+ \frac{L_{F_{1}}}{\gamma}\right) + L_{F_{2}} \,b\, e^{\gamma\,b}\right] +\mathscr M\,e^{\gamma\,b} \sum_{k=1}^{n}L_{I_{k}}  <1.$$ 
 \end{small}
 This restriction have removed  when we obtained same results by  mixed version of integral inequality. 
 
 One can extend similar types of impulsive integral inequalities in  fractional case that can be applied to analyze various qualitative properties of fractional integrodifferential equations with impulse condition.
 
\section*{Acknowledgement}
 The first author is financially supported by UGC, New Delhi, India (Ref: F1-17.1/2017-18/RGNF-2017-18-SC-MAH-43083) and JVCS acknowledges the financial support of a PNPD-CAPES (nº88882.305834/2018-01) scholarship of the Postgraduate Program in Applied Mathematics of IMECC-Unicamp.

\end{document}